\newcommand{\cA}{\mathcal{A}}
\newcommand{\cE}{\mathcal{E}}
\newcommand{\cF}{\mathcal{F}}
\newcommand{\cG}{\mathcal{G}}
\newcommand{\cH}{\mathcal{H}}
\newcommand{\cY}{\mathcal{Y}}
\newcommand{\bE}{\mathbb{E}}
\newcommand{\bF}{\mathbb{F}}
\newcommand{\bG}{\mathbb{G}}
\newcommand{\bH}{\mathbb{H}}
\newcommand{\bOne}{\mathbbm{1}}
\newcommand{\bR}{\mathbb{R}}
\newcommand{\bV}{\mathbb{V}}
\newcommand{\PP}{\boldsymbol{P}}
\newcommand{\EE}{\boldsymbol{E}}
\newcommand{\VV}{\boldsymbol{V}}
\numberwithin{equation}{section}
\DeclareMathOperator*{\esssup}{ess\,sup}
\providecommand{\citep}[1]{(\cite{#1})}
\newcommand{\prT}[1]{(#1_t, \, 0 \leq{t} \leq{T})}
\newcommand{\prTmenos}[1]{(#1_t, \, 0 \leq{t} <{T})}
\newcommand{\Ito}{It\^{o}}
\providecommand{\keywords}[1]
{
  \small	
  \textbf{\textit{Keywords---}} #1
}
\title{An Anticipative Markov Modulated Market}
\author{Bernardo D'Auria
\thanks{D'Annunzio University of Chieti-Pescara, Economy Department, 65122 Pescara PE, Italy.\break\email{Email: bernardo.dauria@unich.it}.}
\and Jos\'{e} A. Salmer\'{o}n
\thanks{Carlos III University of Madrid, Statistics Department, 28911, Legan\'{e}s, Spain.\break\email{Email: joseantonio.salmeron@uc3m.es}.}
}
\numberwithin{equation}{section}
\newtheorem{Theorem}{Theorem}[section]
\newtheorem{Assumption}[Theorem]{Assumption}
\newtheorem{Example}[Theorem]{Example}
\newtheorem{Proposition}[Theorem]{Proposition}
\newtheorem{Definition}[Theorem]{Definition}
\newtheorem{Corollary}[Theorem]{Corollary}
\newtheorem{Lemma}[Theorem]{Lemma}
\newtheorem{Remark}[Theorem]{Remark}
\numberwithin{Theorem}{section}
\providecommand{\citep}[1]{(\cite{#1})}
\newcommand{\email}[1]{\href{mailto:#1}{#1}}
\pgfplotsset{compat=1.17}
\begin{document}
\maketitle
\begin{abstract}
A Markovian modulation captures the trend in the market and influences the market coefficients accordingly.
The different scenarios presented by the market are modeled as the distinct states of a discrete-time Markov chain.
In our paper, we assume the existence of such modulation in a market 
and,
as a novelty, we assume that it can be anticipative 
with respect to the future of the Brownian motion that drives the dynamics of the risky asset. 
By employing these own techniques of enlargement of filtrations,
we solve an optimal portfolio utility problem in both a complete 
and an incomplete market. 
Many examples of anticipative Markov chains are presented for which we compute the 
additional gain of the investor who has a more accurate information.
\end{abstract}

\keywords{
Optimal portfolio; 
Markov modulated;
Regime-switching;
Jacod's hypothesis;
Anticipative information; 
Value of the information.
}

\section{Introduction}\label{sec:intro}
We work within the framework of the optimal portfolio problem in a market in which a discrete-time Markov chain $\varepsilon=\prT{\varepsilon}$ is modulating the state of the \emph{market coefficients}.
The market is composed of two assets and by market coefficients we refer to the drift and the volatility of the risky asset and the short interest rate of the riskless one, see \eqref{def.SDE.DS} for a detailed definition.
The source of randomness driving the dynamics of the risky asset is the Brownian motion $W=\prT{W}$.
We consider, as one of the main novelties of the paper, 
that~$\varepsilon$ is anticipative with respect to the natural filtration of~$W$, denoted by~$\bF$. 
This is motivated by the fact that, in a financial market, the change of regime may be noticed by some investors before the rest, 
due to the presence of anticipative information that creates asymmetry, and, maybe, incompleteness.
In this context, we assume that there exists 
a trader that relies on the information provided by $\bG:=\bF\vee\bF^\varepsilon$, 
being $\bF^\varepsilon$ the natural filtration generated by $\varepsilon$.

The optimal portfolio problem consists of maximizing the expected utility of an investor's gains by looking at the optimal strategy within an admissible set.
A seminal paper about this problem is \cite{merton1969}, where a solution for the logarithmic and CRRA utilities is given for a complete market within the natural information flow $\bF$. 
In \cite{karatzas1991}, the utility function acquires a more general form and the problem is solved both for complete and incomplete market.

The existence of a Markov chain that modulates the market has been deeply studied.
It is often mentioned as a regime-switching model, which leads to a new scenarios where the market coefficients modify their trend for many reasons. 
A seminal paper is~\cite{Bauerle}, where the same market as in~\cite{merton1969} but with the modulation is considered.
Explicit solutions for the HARA utilities in the investment/consumption problem are obtained in~\cite{SotomayorCadenillas2009}.
In addition,
the problem can be stated with a semi-Markov modulation, for example in~\cite{GhoshGoswamiKumar2009}.
In~\cite{AltayColaneri2019}, they faced with a non-continuous risky asset, whose dynamics are driven by a Poisson process.
In~\cite{Azevedo2014}, a dynamic programming principle is derived for a 
Markov-switching jump–diffusion process
while in~\cite{Pamen2017}, 
the general modulated stochastic control problem is solved via maximum principle.

Market modulation is also considered in option pricing, see for example~\cite{Guo2001}. 
A special case is the American options, where an optimal stopping problem arise, see~\cite{Elliott2002} or~\cite{Perkowski2020}, 
the latter is motivated by a subsidy support problem.
Nowadays, the modulation of the market is still an interesting research topic, as in~\cite{Jin2020EJOR}, where a more complex and realistic optimal consumption/investment problem is formulated.

The goal of our paper is to consider an anticipative Markov chain,
to the best of our knowledge, it is never been attempted before.
The existing literature about the asymmetric information in the optimal portfolio framework is divided between two approaches.
The first one is the theory of enlargement of filtrations, both for the initial and the progressive types,
where the so-called \emph{Jacod's hypothesis} is assumed in order to guarantee the semimartingale decomposition of $W$ in the bigger filtration, see~\cite{karatzas1996} for a seminal reference. 
The second one is the anticipative calculus, 
where the stochastic integral dispenses the measurability assumptions,
we refer the monography~\cite{DiNunnoOksendalProske2009} where in the Chapter~16 they face with the optimal portfolio problem under multiple levels of information.
A paper linking both approaches is~\cite{IMKELLER2000}, 
where the semimartingale property is guaranteed by assuming a weaker absolutely continuous condition related to the Malliavin derivative of the conditional probability in an initial enlargement problem.

In this paper we aim to address the problem of asymmetric information in a modulated market. 
To the best of our knowledge, it was partially studied only by \cite{BaltasYannacopoulos2017}, 
leaving many questions still to be solved.
The authors assumed that the modulation is given by a diffusion process, whose driver is correlated to $W$, 
about which they have some future information.
In our work, we assume that the modulation is a discrete-time Markov chain and the information is about the immediate future of the risky asset, allowing a larger number of examples to be developed.

The paper is organized as follows. 
In Section~\ref{sec:anticip.modul.market} we start by defining the utility maximization problem and introduce the notation, 
we also state the Jacod's hypothesis. 
In Section~\ref{subsec:complete} we address the problem for a  complete market, i.e., where the Markov chain at any time is measurable in $\cF_T = \sigma(W_t,0\leq t \leq T)$.
We get the optimal solution in the enlarged filtration~$\bG$ 
by adapting the computations of~\cite{karatzas1991}.
In addition, we provide a number of novel examples for which we compute the $\bG$-semimartingale decomposition and the optimal portfolio under the logarithmic utility.
In Section~\ref{subsec:incomplete} we assume that the $\varepsilon$ also depends on another Brownian motion~$B$ that creates incompleteness in the market. 
We get the solution of the utility problem in the incomplete market in~$\bG$ and we show how to adapt the examples developed in Section~\ref{subsec:complete}.
\section{Anticipative Markov modulated market}\label{sec:anticip.modul.market}
As a general set-up we assume to work in a probability space
$(\Omega, \cF,\bF,\PP)$
where~$\cF$ is the event sigma-algebra, and $\bF=\{\cF_t, t\geq0\}$ is an augmented filtration that is generated by 
the natural filtration of the Brownian motion $W=\prT{W}$ and let $T>0$ be the horizon time.
Let~$\varepsilon$ be a discrete-time Markov chain whose state space is 
$E = \{0,1\}$ and denote by $\bF^\varepsilon$ its own filtration. 
The relation between $\bF$ and~$\bF^\varepsilon$ is unspecified a priori.
We assume that an agent can invest in a market composed just by two assets.
The first one is risk-less while the second one is a risky asset,
their dynamics are given by the following SDEs,
\begin{align}\label{def.SDE.DS}
    dD_t &= {D_t}\,r_{\varepsilon_t}\, dt\ , \quad D_0 = 1 \\
    dS_t &= {S_t}\left(\eta_{\varepsilon_t}\, dt + \xi_{\varepsilon_t}\, dW_t\right)\ ,\quad S_0 = s_0 >0\ ,
\end{align}
where we are considering the case that the drift, the volatility and the short interest rate can take only two different values, i.e.,
$\{\eta_0,\eta_1\}$, $\{\xi_0,\xi_1\}$ and $\{r_0,r_1\}$.
We assume that $\xi_0,\,\xi_1,\,r_0,\,r_1>0$ and $\xi_0\neq\xi_1$.
If we try to apply a version of the \Ito\, Lemma we get,
$$ \ln \frac{S_t}{s_0} = \int_0^t \left( \eta_{\varepsilon_s}-\frac{1}{2}\xi_{\varepsilon_s} \right) ds +\int_0^t \xi_{\varepsilon_s}dW_s \ ,\quad 0\leq t\leq T\ .$$
In this case, the stochastic integral is not well defined for any possible $\bF^\varepsilon$, so we need to give the correct interpretation of the stochastic integral
according to the measurability assumptions on $\varepsilon$.
We consider the following possibilities,
\begin{itemize}
    \item[a)] $\bF^\varepsilon\subset\bF$: In this case, the classical \Ito\, integral is well-defined and we recover a modulated version of the Merton problem, 
    solved in \cite{Bauerle}.
    \item[b)] $\varepsilon\perp W$: The independence of the processes guarantees that the classical \Ito\, integral is well-defined (see Section 3.3 of \cite{OksendalSDE}), 
    but the new source of randomness creates incompleteness.
    This problem can be considered as an special case of \cite{karatzas1991} and it has partially been solved in \cite{GhoshGoswamiKumar2009} or \cite{SotomayorCadenillas2009}.
    \item[c)] $\varepsilon_t\in\cF_T$ but $\varepsilon_t\not\in\cF_t$: In this case the process $\varepsilon$ is anticipative and the \Ito\, integral may be not well-defined.
    Usually in the literature the problem is solved by applying one of the following approaches.
    \begin{enumerate}
        \item \emph{Enlargement of filtrations:} 
        We need to derive the semimartingale decomposition of $W$ within the bigger filtration $\bG:=\bF\vee\bF^\varepsilon$ by imposing the so-called Jacod's hypothesis.
        \item \emph{Anticipative calculus:} We extend the framework of the integral in order to drop the need of the measurability assumptions.
        The new possibilities of integrals are the forward integral or the Skorohod integral.
    \end{enumerate}
    \item[d)] $\varepsilon_t\in\cG_T\supsetneq\cF_T$ but $\varepsilon_t\not\in\cF_t$: The anticipative issue still holds and in addition it is possible that market is incomplete.
\end{itemize}
In this paper we solve the problems formulated in $c)$ and $d)$ in the Sections~\ref{subsec:complete} and~\ref{subsec:incomplete}, respectively.
As we will argue the reason why, we exploit the properties of the binary state-space $E$ by using the enlargement of filtration techniques.

If we denote by \hbox{$X^\pi=\prT{X^\pi}$} the wealth of the portfolio of the investor under an admissible strategy~$\pi$,
we can write its dynamics as the solution of the following SDE, for $0 \leq t \leq T$,
\begin{align*}
\frac{dX_t^{\pi}}{X_t^{\pi}} &= (1-\pi_t)\frac{dD_t}{D_t} + \pi_t \frac{dS_t}{S_t}\ ,
\quad X_0=x_0 > 0 \ .
\end{align*}
Using~\eqref{def.SDE.DS}, it can be expressed in the following form
\begin{align}\label{X.SDE}
dX_t^{\pi} &= (1-\pi_t)X_t^{\pi}r_{\varepsilon_t} \, dt + \pi_t X_t^{\pi}\left(\eta_{\varepsilon_t} \, dt + \xi_{\varepsilon_t} \, dW_t\right) \ ,
\quad X_0 = x_0 > 0 \ .
\end{align}
Usually it is assumed that the strategy $\pi$ makes optimal use of all information at disposal of the agent at each instant,
and in general we are going to assume that the agent's flow of information, modeled by the filtration 
$\bH=\{\cH_t, 0\leq t \leq T\}$,
is possibly larger than filtration $\bF$, that is  $\bF \subset \bH$.
\begin{Definition}
We define the set of admissible strategies $\cA(\bH)$ which contains all the self-financing portfolios $\pi = \prT{\pi}$ such that,
\begin{itemize}\setlength\itemsep{.75em}
    \item $\pi$ is adapted with respect to the filtration $\bH$
    \item $ \int_0^T  \xi_{\varepsilon_t}^2\pi_t^2 dt < +\infty$, $\PP$-almost surely
\end{itemize}
where $\pi_t$ is the proportion of the capital invested in the risky asset at time t. 
\end{Definition}
We define the optimal portfolio~$\pi^\bH = \prT{\pi^\bH}$ 
as the solution of the following optimization problem,
\begin{align}\label{utility}
\bV_T^{\bH} := \sup_{\pi\in\cA({\bH})} \EE[U(X^{\pi}_{T})] = \EE\left[U\left(X^{\pi^\bH}_{T}\right)\right] \ .
\end{align}
The function $U:\bR^+\rightarrow \bR$ denotes the utility of the agent and it is assumed to be continuous, strictly increasing and strictly concave in its domain and satisfying~\hbox{$U(x)=-\infty$} for~$x<0$.
In addition we assume that it is continuously differentiable, being $U'$ also strictly increasing and strictly concave in its domain and satisfying the Inada conditions,
\begin{subequations}\label{inada.conditions}
\begin{align}
    U'(0)       &:= \lim_{x\to 0^+}U'(x) = + \infty \ ,\\
    U'(+\infty) &:= \lim_{x\to\infty}U'(x) = 0\ .
\end{align}
\end{subequations}
We will write $\bG$ as the minimum filtration which is right-continuous and contains  $\bF\vee\bF^\varepsilon$. 
As we consider a discrete-time Markov chain, we assume that it jumps at the set of times $\{t_0,t_1,\ldots,t_n\}$ with $t_n=T$. 
Then, the enlargement $\bF\subset\bG= \{\cG_t, 0\leq t \leq T\}$ is a sequence of initial enlargements 
given by
\begin{equation}\label{def.G}
    \cG_t = \bigcap_{s>t}\left( \cF_s \vee \sigma(\varepsilon_{t_0}, \varepsilon_{t_1}, \ldots, \varepsilon_{t_k}) \right)\ ,\quad t_k \leq t < t_{k+1}\ ,\quad k<n\ .
\end{equation}
So we can reduce it to the framework of initial enlargements in each interval $[t_k,t_{k+1})$.
Note that, as the quadratic variation of $S$,
$
\int_0^t S_s^2\xi^2_{\varepsilon_s} ds$, 
can be observed by the agent, the Markov chain $\varepsilon$ is also seen because $\xi_\cdot$ is bijective.
Then the natural information flow of an agent who knows about the modulation of $\varepsilon$ should be $\bG$ and the enlargement is justified according to the model given by \eqref{def.SDE.DS}.

In addition, \cite[Example 2.7]{AmendingerImkellerSchweizer1998} assures that the binary random variable always satisfy the following Jacod's hypothesis.
\begin{Assumption}\label{Assumpt.Jacod.abs}
The conditional distributions $\PP(\varepsilon_{t_k}\in\cdot\vert \cF_t)$ for $t\in[t_k,t_{k+1})$
almost surely verify the following absolutely continuity condition with respect to $\PP(\varepsilon_{t_k}\in\cdot)$,
$$\PP(\varepsilon_{t_k}\in\cdot\vert \cF_t) \ll \PP(\varepsilon_{t_k}\in\cdot)\ .$$
\end{Assumption}
This assumption assures the existence of a family of jointly measurable processes~$p^{e,k}$ defined as
\begin{equation*}
    p_t^{e,k} =  \frac{\PP( \varepsilon_{t_k} = e \vert \cF_t)}{\PP(\varepsilon_{t_k} = e)}\ ,
    \quad e\in\{0,1\}\ ,\quad k\in\{0,1,...,n-1\}\ ,\quad t_k\leq t< t_{k+1}\ . 
\end{equation*}
The crucial point of the filtration enlargement is to guarantee that the $\bF$-local martingales remain at least $\bG$-semimartingales, 
this is known in the literature as the \emph{(H') hypothesis}.
In \cite[Theorem~2.5]{Jacod1985}, it
is proven that the former Jacod's hypothesis implies the (H') hypothesis.
As the process~$p^{e,k}=(p^{e,k}_t,t_k\leq t < t_{k+1})$ is an~\hbox{$\bF$-martingale} for any $e\in \{0,1\}$, see~\cite[Lemma~2.1]{AmendingerImkellerSchweizer1998}, 
then 
\hbox{$\langle X,p^{e,k}\rangle^{\bF}$}
is well-defined for any $\bF$-local martingale $X$.
We introduce the notation,
\begin{equation}\label{G.bracket} 
\langle X,p^{\varepsilon_{t_k},k}\rangle_t^{\bF} 
:= \langle X,p^{\cdot,k}\rangle_t^{\bF}\circ \varepsilon_{t_k} \ ,\quad t_k\leq t < t_{k+1}\ .
\end{equation}
\begin{Proposition}\label{prop.jacod.semimartingale}
Let $X=\prT{X}$ be an $\bF$-local martingale and 
let~$\bG$ be as in \eqref{def.G}. 
Then,
\begin{equation}\label{Jacod.semimartingale}
    \widehat{X}_t = X_t - \int_{t_k}^t \frac{d\langle X,p^{\varepsilon_{t_k},k}\rangle_s^\bF}{p^{\varepsilon_{t_k},k}_{s-}}\ ,\quad t_k\leq t < t_{k+1}\ ,
\end{equation}
is a $\bG$-local martingale in the interval $[t_k,t_{k+1})$.
\end{Proposition}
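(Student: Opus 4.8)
The plan is to observe that, when restricted to a single interval $[t_k,t_{k+1})$, the filtration $\bG$ is literally an initial enlargement of $\bF$ by a finite-valued random variable, so that the statement reduces to Jacod's theorem. Indeed, for $t\in[t_k,t_{k+1})$ one reads from \eqref{def.G} that $\cG_t=\bigcap_{s>t}\bigl(\cF_s\vee\sigma(\vec\varepsilon_k)\bigr)$ with $\vec\varepsilon_k:=(\varepsilon_{t_0},\dots,\varepsilon_{t_k})$, which is precisely the right-continuous initial enlargement $\bF^{\vec\varepsilon_k}$ of $\bF$ by $\vec\varepsilon_k$, evaluated on that interval. The only times at which $\bG$ fails to be a single initial enlargement are the jump times $t_k$, where a new coordinate is revealed; this is exactly why the conclusion is stated interval by interval, with the correction integral restarted at $t_k$ and $\widehat X_{t_k}=X_{t_k}$.

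First I would verify Jacod's hypothesis for $\vec\varepsilon_k$: since it takes values in the finite set $\{0,1\}^{k+1}$, the requirement $\PP(\vec\varepsilon_k\in\cdot\mid\cF_t)\ll\PP(\vec\varepsilon_k\in\cdot)$ holds automatically, because any atom carrying zero unconditional mass is conditionally null a.s.\ as well; this is the finite-state instance of \cite[Example~2.7]{AmendingerImkellerSchweizer1998} and is covered by Assumption~\ref{Assumpt.Jacod.abs}. Then \cite[Theorem~2.5]{Jacod1985} applies on $[t_k,t_{k+1})$ and yields both the (H') hypothesis and an explicit decomposition: every $\bF$-local martingale $X$ is an $\bF^{\vec\varepsilon_k}$-semimartingale there, with drift equal to the information drift $\int d\langle X,p^{\vec e}\rangle^\bF_s/p^{\vec e}_{s-}$ evaluated at $\vec e=\vec\varepsilon_k$, where $p^{\vec e}_t=\PP(\vec\varepsilon_k=\vec e\mid\cF_t)/\PP(\vec\varepsilon_k=\vec e)$ is a strictly positive $\bF$-martingale on $\{\vec\varepsilon_k=\vec e\}$, so that the stochastic integral is well defined.

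It remains to rewrite this drift in the form stated in \eqref{Jacod.semimartingale}. Two reductions are needed. Running the enlargement from time $t_k$ with base filtration $(\cF_t)_{t\ge t_k}$ only replaces $p^{\vec e}_t$ by $\PP(\vec\varepsilon_k=\vec e\mid\cF_t)/\PP(\vec\varepsilon_k=\vec e\mid\cF_{t_k})$, i.e.\ multiplies it by an $\cF_{t_k}$-measurable factor that is constant in $t$ on $[t_k,t_{k+1})$ and hence cancels between $d\langle X,\cdot\rangle^\bF$ and the denominator; this pins the lower limit to $t_k$. Secondly, the coordinates $\varepsilon_{t_0},\dots,\varepsilon_{t_{k-1}}$ are already $\cG_{t_k}$-measurable, and conditioning the enlarged martingale further on them should contribute no additional drift on $[t_k,t_{k+1})$, leaving only the one-step density $p^{e,k}_t=\PP(\varepsilon_{t_k}=e\mid\cF_t)/\PP(\varepsilon_{t_k}=e)$; since $p^{\cdot,k}$ is an $\bF$-martingale by \cite[Lemma~2.1]{AmendingerImkellerSchweizer1998}, the $\bF$-bracket $\langle X,p^{\cdot,k}\rangle^\bF$ and its evaluation $\langle X,p^{\varepsilon_{t_k},k}\rangle^\bF$ through \eqref{G.bracket} are meaningful, and $p^{e,k}_{s-}>0$ on $\{\varepsilon_{t_k}=e\}$. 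Putting these together, $\widehat X$ in \eqref{Jacod.semimartingale} is the $\bF^{\vec\varepsilon_k}$-local-martingale part of $X$ on $[t_k,t_{k+1})$; as $\widehat X$ is $\bG$-adapted and $\cG_t=\cF^{\vec\varepsilon_k}_t$ on that interval, it is in particular a $\bG$-local martingale there, which is the assertion.

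The main obstacle is precisely the second reduction: showing that, once the past regimes $(\varepsilon_{t_0},\dots,\varepsilon_{t_{k-1}})$ --- which are $\cG_{t_k}$-measurable --- are conditioned upon, the only surviving information drift on $[t_k,t_{k+1})$ is the one built from $p^{e,k}$. I would attack this either through the stability of Jacod's property under successive initial enlargements, checking that the extra $\cG_{t_k}$-measurable information carries zero drift for the already-enlarged martingale, or by computing $\PP(\varepsilon_{t_k}=e\mid\cF_t,\varepsilon_{t_0},\dots,\varepsilon_{t_{k-1}})$ directly and exploiting the Markov character of $\varepsilon$ together with the $\bF$-adaptedness structure. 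By comparison, the positivity and $\bF$-martingale properties of the densities $p^{e,k}$, which make every bracket and quotient legitimate, are routine and follow from \cite{AmendingerImkellerSchweizer1998}.
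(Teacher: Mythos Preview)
The paper does not supply a proof of this proposition: it is stated immediately after Assumption~\ref{Assumpt.Jacod.abs} and is treated as a direct instance of \cite[Theorem~2.5]{Jacod1985}, with no further argument. Your plan---recognising that on each $[t_k,t_{k+1})$ the filtration $\bG$ is an initial enlargement and then invoking Jacod's decomposition---is exactly the intended route, only spelled out in more detail than the paper does.

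You are also right to single out the ``second reduction'' as the only real issue: Jacod's theorem applied to the vector $\vec\varepsilon_k=(\varepsilon_{t_0},\dots,\varepsilon_{t_k})$ produces a drift built from the \emph{vector} density, whereas \eqref{Jacod.semimartingale} displays only the one-step density $p^{\cdot,k}$. Your justification that the earlier coordinates are $\cG_{t_k}$-measurable is, however, not sufficient on its own---that measurability is automatic from the definition of $\bG$ and does not by itself force the vector drift to collapse. What actually makes the reduction work in the paper is that, in every concrete setting considered, the past regimes lie in the \emph{base} filtration at time $t_k$: Section~\ref{subsec:complete} assumes $\varepsilon_{t_j}\in\cF_{t_{j+1}}\subset\cF_{t_k}$ for $j<k$, and Section~\ref{subsec:incomplete} assumes the analogous $\cE_{t_{k+1}}$-measurability. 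Under that hypothesis, for $t\ge t_k$ one has
\[
\PP(\vec\varepsilon_k=\vec e\mid\cF_t)=\bOne_{\{\varepsilon_{t_j}=e_j,\ j<k\}}\,\PP(\varepsilon_{t_k}=e_k\mid\cF_t),
\]
so the vector density and $p^{e_k,k}$ differ on $[t_k,t_{k+1})$ by an $\cF_{t_k}$-measurable constant that cancels between numerator and denominator---this is precisely your first reduction applied once more. Neither the Markov property of $\varepsilon$ nor a successive-enlargement stability argument is needed. So your outline stands; just replace the $\cG_{t_k}$-measurability claim by the $\cF_{t_k}$-measurability of the past regimes, which the paper imposes in the sections where the proposition is used.
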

\section{Complete market}\label{subsec:complete}
In order to get the completeness of the market, in this section we assume that
\hbox{$\varepsilon_{t_k}\in\cF_{t_{k+1}}$}, for any $k<n$, so there is no more source of randomness in the market than the one provided by the $\bF$-Brownian motion $W$.
By using the Jacod's hypothesis we guarantee the $\bG$-semimartingale decomposition of $W$.
A more explicit expression for the integral process appearing in \eqref{Jacod.semimartingale} can be obtained in the following result.
\begin{Theorem}\label{alpha.binary}
If we consider the enlargement defined in \eqref{def.G},
the $\bG$-adapted process
$$ \alpha_t^\varepsilon(k) = 
\EE[D_t \varepsilon_{t_k}\vert\cF_t]
\frac{\varepsilon_{t_k}-\EE[\varepsilon_{t_k}\vert \cF_t]}
{\VV[\varepsilon_{t_k}\vert \cF_t]} = \frac{\EE[D_t \varepsilon_{t_k}\vert\cF_t]}{\VV[\varepsilon_{t_k}\vert \cF_t]}
\int_{t}^{t_{k+1}} \EE[D_s \varepsilon_{t_k}\vert\cF_s]dW_s\ ,
$$
satisfies that $W_t - \int_{t_k}^t\alpha^\varepsilon_s(k) ds$ is a $\bG$-Brownian motion for $t_k\leq t < t_{k+1}$.
\end{Theorem}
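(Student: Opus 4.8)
The strategy is to apply Proposition~\ref{prop.jacod.semimartingale} to the $\bF$-martingale $X=W$ and then to make the predictable bracket $\langle W,p^{\varepsilon_{t_k},k}\rangle^{\bF}$ explicit, exploiting that $\varepsilon_{t_k}$ is $\{0,1\}$-valued and $\cF_{t_{k+1}}$-measurable. By Proposition~\ref{prop.jacod.semimartingale}, the process
\begin{equation*}
\widehat{W}_t \defeq W_t - \int_{t_k}^{t}\frac{d\langle W,p^{\varepsilon_{t_k},k}\rangle_s^{\bF}}{p_{s-}^{\varepsilon_{t_k},k}}\ ,\qquad t_k\leq t<t_{k+1}\ ,
\end{equation*}
is a $\bG$-local martingale on $[t_k,t_{k+1})$, and since its drift is absolutely continuous in $t$ it suffices to identify that drift with $\int_{t_k}^{t}\alpha^\varepsilon_s(k)\,ds$ and then to upgrade $\widehat W$ to a $\bG$-Brownian motion.

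For the identification, set $M_t\defeq\EE[\varepsilon_{t_k}\mid\cF_t]$. As $E=\{0,1\}$, one has $\PP(\varepsilon_{t_k}=1\mid\cF_t)=M_t$ and $\PP(\varepsilon_{t_k}=0\mid\cF_t)=1-M_t$, hence $p_t^{1,k}=M_t/\PP(\varepsilon_{t_k}=1)$ and $p_t^{0,k}=(1-M_t)/\PP(\varepsilon_{t_k}=0)$; in particular both processes are continuous, so $p_{s-}^{\varepsilon_{t_k},k}=p_{s}^{\varepsilon_{t_k},k}$. Because $\bF$ is generated by $W$, the martingale representation theorem yields a predictable $\varphi^k$ with $M_t=\EE[\varepsilon_{t_k}]+\int_0^{t}\varphi^k_s\,dW_s$, and the Clark--Ocone formula identifies $\varphi^k_s=\EE[D_s\varepsilon_{t_k}\mid\cF_s]$, with $D$ the Malliavin derivative. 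Consequently $\langle W,p^{1,k}\rangle_t^{\bF}=\PP(\varepsilon_{t_k}=1)^{-1}\int_{t_k}^{t}\EE[D_s\varepsilon_{t_k}\mid\cF_s]\,ds$ and $\langle W,p^{0,k}\rangle_t^{\bF}=-\PP(\varepsilon_{t_k}=0)^{-1}\int_{t_k}^{t}\EE[D_s\varepsilon_{t_k}\mid\cF_s]\,ds$.

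Substituting $e=\varepsilon_{t_k}$ into these brackets according to the convention \eqref{G.bracket} and dividing by $p^{\varepsilon_{t_k},k}_{s}$, the drift density equals $\EE[D_s\varepsilon_{t_k}\mid\cF_s]/M_s$ on $\{\varepsilon_{t_k}=1\}$ and $-\EE[D_s\varepsilon_{t_k}\mid\cF_s]/(1-M_s)$ on $\{\varepsilon_{t_k}=0\}$; the two cases merge into
\begin{equation*}
\EE[D_s\varepsilon_{t_k}\mid\cF_s]\,\frac{\varepsilon_{t_k}-M_s}{M_s(1-M_s)}\ .
\end{equation*}
Since $\varepsilon_{t_k}^2=\varepsilon_{t_k}$ we have $\VV[\varepsilon_{t_k}\mid\cF_s]=M_s-M_s^2=M_s(1-M_s)$, which is the first expression for $\alpha^\varepsilon_s(k)$; and because $\varepsilon_{t_k}\in\cF_{t_{k+1}}$ gives $M_{t_{k+1}}=\varepsilon_{t_k}$, one gets $\varepsilon_{t_k}-M_s=\int_s^{t_{k+1}}\EE[D_u\varepsilon_{t_k}\mid\cF_u]\,dW_u$, which yields the second. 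A minor point to check along the way is that $\alpha^\varepsilon_\cdot(k)$ is $\bG$-adapted and that the drift integral is a.s.\ finite for $t<t_{k+1}$; the latter is precisely the integrability already granted by Proposition~\ref{prop.jacod.semimartingale}.

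Finally, $\widehat W$ is a continuous $\bG$-local martingale on $[t_k,t_{k+1})$ whose quadratic variation is unaffected by the finite-variation drift, so $\langle\widehat W\rangle_t=t$; L\'evy's characterization then shows that $\widehat W_\cdot-W_{t_k}$, equivalently $W_t-\int_{t_k}^{t}\alpha^\varepsilon_s(k)\,ds$, is a $\bG$-Brownian motion on $[t_k,t_{k+1})$. I expect the main obstacle to be the middle part: faithfully implementing the state-by-state evaluation $\langle W,p^{\cdot,k}\rangle^{\bF}\circ\varepsilon_{t_k}$ and checking that the two displayed forms of $\alpha^\varepsilon(k)$ coincide; once that is settled, the passage from local martingale to Brownian motion via L\'evy is routine.
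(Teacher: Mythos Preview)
Your proposal is correct and follows essentially the same route as the paper: both combine Jacod's semimartingale decomposition with the Clark--Ocone representation of the binary variable $\varepsilon_{t_k}$ to identify the information drift, and then recognize the resulting process as a Brownian motion. The paper's proof is terser, delegating the explicit computation of $\alpha^\varepsilon(k)$ to \cite[Theorem~2]{dauriaSalmeron2021SAA}, whereas you unpack that computation directly from Proposition~\ref{prop.jacod.semimartingale} and the binary identity $\VV[\varepsilon_{t_k}\mid\cF_s]=M_s(1-M_s)$; the substance is the same.
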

\begin{proof}
It follows by applying 
\cite[Theorem 2]{dauriaSalmeron2021SAA} for binary random variables jointly with the Clark-Ocone formula
\cite[Theorem 6.35]{DiNunnoOksendalProske2009},
\begin{align}\label{eq.clark.ocone}
    \varepsilon_{t_k} = \EE[ \varepsilon_{t_k} ] + \int_0^{t_{k+1}} \EE[D_t \varepsilon_{t_k}\vert\cF_t] dt\ ,
\end{align}
where the Malliavin derivative is considered in the generalized white noise version for any $L^2(\PP)$-random variable $\cF_T$-measurable.
\end{proof}
The process $\alpha^\varepsilon(k)$ is strongly related with the $\bF$-local martingale $p^{e,k}$.
Indeed, in \cite[Remark 5]{baudoin2003} we can find the following relationship
\begin{equation}\label{eq.Baudoin.represent}
    dp_t^{e,k} = p_t^{e,k} \alpha_t^e(k) dW_t\ .
\end{equation}
By concatenating the previous theorem and using that
$\alpha^\varepsilon(k)\in L^2([t_k,t_{k+1}],dt\times\PP,\bG)$ for any $k<n$, 
we prove the following result in the whole interval time.
\begin{Corollary}\label{cor.BM.G}
In the context of Theorem \ref{alpha.binary},
$$ \widehat W_t := W_t - \int_0^t\alpha^\varepsilon_s ds \ ,\quad \alpha^\varepsilon_t :=  \sum_{k=0}^{n-1}\bOne_{\{t_k\leq t < t_{k+1}\}}\alpha^\varepsilon_t(k)\ , \quad 0\leq t \leq T\ .$$
is a well-defined $\bG$-Brownian motion in $[0,T]$. $\alpha^\varepsilon$ is named the \emph{information drift}.
\end{Corollary}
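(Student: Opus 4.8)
The plan is to reduce the statement to the L\'evy characterization of Brownian motion: I will check that $\widehat W$ is a continuous $\bG$-adapted process with $\widehat W_0=0$ and $\langle\widehat W\rangle_t=t$, and that it is a $\bG$-martingale on the whole of $[0,T]$; since $\bG$ is right-continuous by its very definition \eqref{def.G}, these properties force $\widehat W$ to be a $\bG$-Brownian motion. The only non-routine ingredient is the martingale property, because Theorem~\ref{alpha.binary} only delivers it separately on each window $[t_k,t_{k+1})$, so the finitely many deterministic times $t_1,\dots,t_n$ have to be bridged.

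First I would dispatch the elementary properties. On $[t_k,t_{k+1})$ the integrand $\alpha^\varepsilon(k)$ is $\bG$-adapted by Theorem~\ref{alpha.binary}, and from $\alpha^\varepsilon(k)\in L^2([t_k,t_{k+1}],dt\times\PP,\bG)$ together with the Cauchy--Schwarz inequality one has $\int_{t_k}^{t_{k+1}}\abs{\alpha^\varepsilon_s(k)}\,ds<\infty$ almost surely; as the partition is finite, $\int_0^t\abs{\alpha^\varepsilon_s}\,ds<\infty$ for every $t\le T$, so $\widehat W_t=W_t-\int_0^t\alpha^\varepsilon_s\,ds$ is well defined, continuous, $\bG$-adapted and vanishes at $0$. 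Moreover $t\mapsto\int_0^t\alpha^\varepsilon_s\,ds$ has finite variation, whence $\langle\widehat W\rangle_t=\langle W\rangle_t=t$.

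For the martingale property I would introduce, for $t_k\le t<t_{k+1}$, the process $\widehat W^{(k)}_t:=W_t-\int_{t_k}^t\alpha^\varepsilon_s(k)\,ds$, which is a $\bG$-Brownian motion on $[t_k,t_{k+1})$ by Theorem~\ref{alpha.binary}; a direct computation gives $\widehat W_t-\widehat W_{t_k}=\widehat W^{(k)}_t-\widehat W^{(k)}_{t_k}$ with $\widehat W^{(k)}_{t_k}=W_{t_k}$. Since the increments $\widehat W^{(k)}_t-\widehat W^{(k)}_{t_k}$, $t\in[t_k,t_{k+1})$, are $L^2$-bounded, this piece is uniformly integrable and extends by path-continuity to a square-integrable $\bG$-martingale on the closed interval $[t_k,t_{k+1}]$, its terminal value being measurable with respect to $\cG_{t_{k+1}^-}\subseteq\cG_{t_{k+1}}$, an inclusion that is read off from \eqref{def.G} and the fact that $\bF$ is generated by a Brownian motion. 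In particular $\EE[\widehat W_t-\widehat W_{t_k}\mid\cG_{t_k}]=0$ for every $t\in[t_k,t_{k+1}]$. Given $0\le s\le t\le T$, I would then split $\widehat W_t-\widehat W_s$ at the grid points lying in $(s,t)$ and apply the tower property together with these identities to get $\EE[\widehat W_t-\widehat W_s\mid\cG_s]=0$; as all increments involved are square-integrable, $\widehat W$ is a square-integrable $\bG$-martingale on $[0,T]$, and L\'evy's theorem finishes the proof. The main obstacle is exactly this \emph{gluing} step, namely verifying that each Brownian piece $\widehat W^{(k)}$ closes at its right endpoint $t_{k+1}$ and that its closure is $\cG_{t_{k+1}}$-measurable, so that the tower property can be chained through all the jump times.
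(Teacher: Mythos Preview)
Your argument is correct, and it reaches the conclusion by a genuinely different route from the paper. The paper's proof first invokes \cite[Theorem~4.1]{AmendingerImkellerSchweizer1998} to identify $\tfrac{1}{2}\int_0^T\EE[(\alpha^\varepsilon_t)^2]\,dt$ with the finite quantity $-\sum_k H(\varepsilon_{t_k})$ (the entropies of the binary variables), and then appeals to \cite[Theorem~3.2]{Imkeller2003} to upgrade the $\bG$-Brownian motion from $[0,T)$ to the closed interval $[0,T]$. You instead run L\'evy's characterization directly: you take the $L^2$-integrability of $\alpha^\varepsilon(k)$ on each closed window as input (the paper asserts it just before the corollary), close each Brownian piece $\widehat W^{(k)}$ at its right endpoint by uniform integrability, and then chain the pieces across the grid via the tower property. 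Your argument is more elementary and self-contained, and it is also more explicit about the gluing at the intermediate times $t_1,\dots,t_{n-1}$, a step the paper handles rather tersely; what the paper's approach buys in return is the explicit entropy identity for $\int_0^T\EE[(\alpha^\varepsilon_t)^2]\,dt$, which is reused later in the proof of Proposition~\ref{prop.price}.
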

\begin{proof}
By applying the Jacod's hypothesis, 
see Assumption~\ref{Assumpt.Jacod.abs} and Theorem~\ref{alpha.binary},
we know that 
$\widehat W = \prTmenos{\widehat W}$ is a $\bG$-Brownian motion with $T$ excluded.
Moreover, as we are considering only enlargements with purely atomic random variables, 
by~\cite[Theorem 4.1]{AmendingerImkellerSchweizer1998}, we have that
$$ \frac{1}{2}\int_0^T\EE\left[\left(\alpha^\varepsilon_t\right)^2 \right]dt = -\sum_{i=1}^{n-1} H(\varepsilon_{t_k}) <+\infty\ , $$
being $H(\varepsilon_{t_k})$ the entropy operator of the random variable $\varepsilon_{t_k}$.
Then, we can appy \cite[Theorem 3.2]{Imkeller2003} and we get that $\widehat W$ is a $\bG$-Brownian motion with $T$ included.
\end{proof}
To make a consistent notation with the continuous-time processes, 
we sometimes consider 
$\varepsilon_t := \varepsilon_{t_k}$ for $t_k\leq t < t_{k+1}$, for example, in the SDE of the assets.
In the next lemma, following~\cite{karatzas1991}, we construct a $\bG$-exponential local martingale, usually called a \emph{deflator} in the literature, see~\cite[Section 1.4]{Aksamit2017} or~\cite[Definition 3.1]{Fontana2014}. 
We will require the definition of the following process
\begin{equation}\label{def.risk.process}
    \theta^{\varepsilon}_t := \frac{\eta_{\varepsilon_t} - r_{\varepsilon_t}}{\xi_{\varepsilon_t}}+\alpha_t^\varepsilon \ ,\quad 0 \leq t \leq  T\ .
\end{equation}
\begin{Lemma}
The process $D^{-1}ZS = (D_t^{-1}Z_tS_t,0 \leq t \leq T)$ is a non-negative $\bG$-local martingale,
where
\begin{equation}\label{def.emm}
    Z_t = \cE_t\left(-\int_0^\cdot \theta^{\varepsilon}_s d\widehat W_s\right) \ ,
\end{equation}
with $\mathcal{E}_t(\cdot)$ denoting the \emph{Dol\'eans-Dade exponential}.
Moreover, $D^{-1}Z X^{\pi}$ is also a non-negative $\bG$-local martingale for any $\pi\in\cA(\bG)$.
\end{Lemma}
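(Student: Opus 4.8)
The plan is to carry out the whole computation inside the enlarged filtration $\bG$, where, by Corollary~\ref{cor.BM.G}, the process $\widehat W=\prT{\widehat W}$ with $d\widehat W_t = dW_t - \alpha^\varepsilon_t\,dt$ is a genuine $\bG$-Brownian motion on the closed interval $[0,T]$. First I would rewrite the dynamics of the elementary processes in terms of $\widehat W$. Substituting $dW_t = d\widehat W_t + \alpha^\varepsilon_t\,dt$ into \eqref{def.SDE.DS} gives
\begin{equation*}
dS_t = S_t\bigl((\eta_{\varepsilon_t}+\xi_{\varepsilon_t}\alpha^\varepsilon_t)\,dt + \xi_{\varepsilon_t}\,d\widehat W_t\bigr),
\end{equation*}
while $D_t^{-1} = \exp\bigl(-\int_0^t r_{\varepsilon_s}\,ds\bigr)$ is of finite variation with $dD_t^{-1} = -D_t^{-1}r_{\varepsilon_t}\,dt$, and by definition of the Dol\'eans-Dade exponential $dZ_t = -Z_t\theta^\varepsilon_t\,d\widehat W_t$.

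Second, I would check that $Z$ is a well-defined non-negative $\bG$-local martingale. Since the coefficients $\eta_{\varepsilon_\cdot},\xi_{\varepsilon_\cdot},r_{\varepsilon_\cdot}$ take values in finite sets with $\xi_0,\xi_1>0$, the ``Merton part'' $(\eta_{\varepsilon_t}-r_{\varepsilon_t})/\xi_{\varepsilon_t}$ of $\theta^\varepsilon$ is bounded; together with $\alpha^\varepsilon\in L^2([0,T],dt\times\PP,\bG)$ established in the proof of Corollary~\ref{cor.BM.G}, this yields $\int_0^T(\theta^\varepsilon_t)^2\,dt<+\infty$ $\PP$-a.s., so $\int_0^\cdot\theta^\varepsilon_s\,d\widehat W_s$ is a well-defined continuous $\bG$-local martingale and its stochastic exponential $Z$ is a non-negative continuous $\bG$-local martingale (no true-martingale property is needed here).

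Third — the computational core — I would apply the \Ito\, product rule to the three-factor product $D^{-1}ZS$, noting that $D^{-1}$, being of finite variation, contributes no quadratic covariation, so that $d\langle Z,S\rangle_t = -Z_tS_t\xi_{\varepsilon_t}\theta^\varepsilon_t\,dt$ is the only bracket term. Collecting the $dt$-terms and dividing by $D^{-1}ZS$ gives
\begin{equation*}
-r_{\varepsilon_t} + (\eta_{\varepsilon_t}+\xi_{\varepsilon_t}\alpha^\varepsilon_t) - \xi_{\varepsilon_t}\theta^\varepsilon_t
= -r_{\varepsilon_t} + \eta_{\varepsilon_t} + \xi_{\varepsilon_t}\alpha^\varepsilon_t - \xi_{\varepsilon_t}\Bigl(\tfrac{\eta_{\varepsilon_t}-r_{\varepsilon_t}}{\xi_{\varepsilon_t}}+\alpha^\varepsilon_t\Bigr) = 0,
\end{equation*}
so $D^{-1}ZS$ has only a $d\widehat W_t$ component and is a $\bG$-local martingale, with non-negativity immediate from $D^{-1}>0$, $S>0$, $Z\geq 0$. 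For the last assertion I would repeat the identical computation with $X^\pi$ in place of $S$: the $\bG$-dynamics of \eqref{X.SDE} read $dX_t^\pi = X_t^\pi\bigl(((1-\pi_t)r_{\varepsilon_t}+\pi_t\eta_{\varepsilon_t}+\pi_t\xi_{\varepsilon_t}\alpha^\varepsilon_t)\,dt + \pi_t\xi_{\varepsilon_t}\,d\widehat W_t\bigr)$, with $d\langle Z,X^\pi\rangle_t = -Z_tX_t^\pi\pi_t\xi_{\varepsilon_t}\theta^\varepsilon_t\,dt$, so the drift of $D^{-1}ZX^\pi$ divided by $D^{-1}ZX^\pi$ equals $\pi_t\bigl(-r_{\varepsilon_t}+\eta_{\varepsilon_t}+\xi_{\varepsilon_t}\alpha^\varepsilon_t-\xi_{\varepsilon_t}\theta^\varepsilon_t\bigr)=0$; the admissibility condition $\pi\in\cA(\bG)$, i.e. $\int_0^T\xi_{\varepsilon_t}^2\pi_t^2\,dt<+\infty$ $\PP$-a.s., guarantees that the surviving integral against $\widehat W$ is a legitimate $\bG$-local martingale, and $X^\pi>0$ (it is a stochastic exponential started at $x_0>0$) gives the non-negativity.

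I expect the main obstacle to be not the drift cancellation, which is immediate once $\theta^\varepsilon$ is read off \eqref{def.risk.process}, but the bookkeeping ensuring that every process is $\bG$-adapted and every stochastic integral is well-posed: this relies on Corollary~\ref{cor.BM.G} (that $\widehat W$ is a $\bG$-Brownian motion on the whole closed $[0,T]$, glued across the jump times $t_k$), on the $L^2$-integrability of the information drift $\alpha^\varepsilon$, and on the admissibility requirement for $\pi$. With those in hand the product-rule calculation closes the argument.
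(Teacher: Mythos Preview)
Your proposal is correct and follows exactly the approach the paper indicates: the paper's own proof consists of the single sentence ``The proof follows directly by applying the \Ito\ Lemma'' (with a pointer to \cite[Proposition~3.4(2)]{AmendingerImkellerSchweizer1998}), and your computation is precisely that \Ito\ product-rule calculation spelled out in detail, including the drift cancellation via \eqref{def.risk.process} and the integrability checks the paper leaves implicit.
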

\begin{proof}
The proof follows directly by applying the \Ito\, Lemma, a similar one can be consulted in~\cite[Proposition 3.4(2)]{AmendingerImkellerSchweizer1998}.
\end{proof}
In the following theorem we solve the utility maximization problem under the filtration $\bG$ in a complete market modulated by $\varepsilon$.
It is one of the main results of the paper, 
the proof employs techniques similar to those of \cite{Amendinger2003,karatzas1991, PhamQuenez2001} in the complete case.
\begin{Theorem}\label{theo.opt.complete}
The solution of the optimal portfolio problem is 
\begin{align}
  \sup_{\pi\in\cA(\bG)} 
  \EE[U( X^{\pi}_T)] =  \EE[(U\circ I)(\cY(x_0) D_T^{-1} Z_T)] \ ,
\end{align}
where $X^\pi$ satisfies~\eqref{X.SDE},
$I$ is the inverse of $U'$ and 
$\cY(x_0)$ is the measurable function satisfying
\begin{equation}
    \EE\left[ D_T^{-1} Z_T I\left( \cY(x_0) D_T^{-1} Z_T\right)\right] = x_0\ .
\end{equation}
The optimal portfolio is attained by a strategy {$\pi^{\bG}\in\cA(\bG)$} which hedges the random variable 
\hbox{$X_T^{\pi^{\bG}} = I(\cY(x_0) D_T^{-1} Z_T)$}.
\end{Theorem}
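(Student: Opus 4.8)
The plan is to transport the martingale/convex-duality method of \cite{karatzas1991} (compare \cite{PhamQuenez2001,Amendinger2003}) to the enlarged filtration $\bG$, using the $\bG$-Brownian motion $\widehat W$ of Corollary~\ref{cor.BM.G} and the deflator $D^{-1}Z$ from the lemma preceding the theorem. Before the main argument I would record the well-posedness of $\cY(x_0)$: the hypotheses on $U$ together with the Inada conditions~\eqref{inada.conditions} make $I=(U')^{-1}$ continuous, strictly decreasing and onto $(0,\infty)$ (so $I(0^+)=+\infty$, $I(+\infty)=0$), hence---under the standing assumption that this expectation is finite---the map $y\mapsto\EE[D_T^{-1}Z_T\,I(yD_T^{-1}Z_T)]$ is continuous and strictly decreasing from $+\infty$ to $0$, so a unique constant $\cY(x_0)>0$ solves the budget equation; put $B:=I(\cY(x_0)D_T^{-1}Z_T)$, which is strictly positive $\PP$-a.s. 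Since $X^\pi$ is positive by the multiplicative form of~\eqref{X.SDE}, all the manipulations below stay in the domain of $U$.

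The first half is the \emph{upper bound}. For any $\pi\in\cA(\bG)$ the preceding lemma gives that $D^{-1}ZX^\pi$ is a non-negative $\bG$-local martingale, hence a supermartingale, so $\EE[D_T^{-1}Z_T X_T^\pi]\le D_0^{-1}Z_0X_0=x_0$. Combining the pointwise Fenchel inequality $U(x)\le U(I(y))+y\,(x-I(y))$, valid for $x,y>0$ and immediate from the concavity of $U$ and $U'(I(y))=y$, evaluated at $y=\cY(x_0)D_T^{-1}Z_T$ and $x=X_T^\pi$, with the budget identity, and taking expectations gives
\begin{align*}
\EE[U(X_T^\pi)] &\le \EE[(U\circ I)(\cY(x_0)D_T^{-1}Z_T)] \\
&\quad + \cY(x_0)\big(\EE[D_T^{-1}Z_T X_T^\pi]-x_0\big) \\
&\le \EE[(U\circ I)(\cY(x_0)D_T^{-1}Z_T)]\ ;
\end{align*}
taking the supremum over $\cA(\bG)$ yields the inequality ``$\le$'' in the claimed identity.

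The second half is \emph{attainability}. Here I would use that, in the complete regime $\varepsilon_{t_k}\in\cF_{t_{k+1}}$, the $\bG$-Brownian motion $\widehat W$ has the predictable representation property under $\bG$, i.e.\ the stability of the martingale representation property under an initial enlargement by a purely atomic random variable satisfying Jacod's hypothesis, in the spirit of \cite{AmendingerImkellerSchweizer1998}, applied on each interval $[t_k,t_{k+1})$ and concatenated across the jump times exactly as for $\widehat W$ itself in Corollary~\ref{cor.BM.G}. Then the strictly positive $\bG$-martingale $M_t:=\EE[D_T^{-1}Z_T B\mid\cG_t]$, with $M_0=x_0$, can be written $M_t=x_0+\int_0^t\phi_s\,d\widehat W_s$ for a $\bG$-predictable $\phi$. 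Setting
\begin{align*}
\pi_t := \frac{1}{\xi_{\varepsilon_t}}\left(\theta^\varepsilon_t+\frac{\phi_t}{M_t}\right)\ ,\quad 0\le t\le T\ ,
\end{align*}
with $\theta^\varepsilon$ as in~\eqref{def.risk.process}, one checks that $\pi\in\cA(\bG)$ (using $\int_0^T(\alpha^\varepsilon_t)^2\,dt<\infty$ $\PP$-a.s., $\langle M\rangle_T<\infty$, and that $M$ is bounded away from $0$ on $[0,T]$). The \Ito\ lemma applied to $D^{-1}ZX^\pi$---using $dW=d\widehat W+\alpha^\varepsilon\,dt$, the dynamics~\eqref{X.SDE}, $dZ=-Z\theta^\varepsilon\,d\widehat W$ from~\eqref{def.emm}, and $dD=Dr_\varepsilon\,dt$---gives
\begin{align*}
d(D_t^{-1}Z_tX_t^\pi) &= D_t^{-1}Z_tX_t^\pi\,(\pi_t\xi_{\varepsilon_t}-\theta^\varepsilon_t)\,d\widehat W_t \\
&= D_t^{-1}Z_tX_t^\pi\,\frac{\phi_t}{M_t}\,d\widehat W_t\ ,
\end{align*}
i.e.\ the same linear SDE that $M$ solves with the same initial value $x_0$; by uniqueness $D_t^{-1}Z_tX_t^\pi=M_t$ for all $t$, so $X_T^\pi=D_TZ_T^{-1}M_T=B=I(\cY(x_0)D_T^{-1}Z_T)$. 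Hence $\EE[U(X_T^\pi)]=\EE[(U\circ I)(\cY(x_0)D_T^{-1}Z_T)]$, which matches the upper bound; therefore $\pi^{\bG}:=\pi$ is optimal, $\bV_T^{\bG}=\EE[(U\circ I)(\cY(x_0)D_T^{-1}Z_T)]$, and $X_T^{\pi^{\bG}}=I(\cY(x_0)D_T^{-1}Z_T)$, the optimal terminal wealth being unique by strict concavity of $U$.

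The step I expect to be the main obstacle is the claim that $\widehat W$ carries the predictable representation property to $\bG$ on the whole of $[0,T]$: this is where completeness of the enlarged market is genuinely used, and the representations obtained on the successive initial-enlargement blocks $[t_k,t_{k+1})$ of~\eqref{def.G} have to be glued across the jump times $t_0<\dots<t_n$, paralleling the concatenation already performed for $\widehat W$ in Corollary~\ref{cor.BM.G}. Lesser points needing care are the admissibility---rather than mere predictability---of the $\pi$ built from $\phi$, and the standing integrability of the budget functional used to define $\cY(x_0)$.
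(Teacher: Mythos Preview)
Your proof is correct and follows essentially the same martingale--duality route as the paper: both use the Fenchel/concavity inequality $U(x)\le U(I(y))+y(x-I(y))$ together with the supermartingale property of $D^{-1}ZX^{\widehat\pi}$ for the upper bound, and completeness of the $\bG$-market for attainability. The paper compresses your attainability step into a one-line appeal to completeness (and cites \cite[Lemma~6.2]{karatzas1991} for the true-martingale property of $D^{-1}ZX^{\pi^\bG}$), whereas you spell out the hedge construction via the predictable representation of $\widehat W$; the concern you flag about gluing the representation across the blocks $[t_k,t_{k+1})$ is exactly the content the paper leaves implicit and later handles via \cite{Fontana18PRP}.
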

\begin{proof}
By the completeness of the market,
we know that there exists $\pi\in\cA(\bG)$ such that 
$X^{\pi}_T =  I(\cY(x_0) D_T^{-1} Z_T)$ 
$\PP$-almost surely.
By applying \cite[Lemma 6.2]{karatzas1991}, the process 
$D^{-1}ZX^{\pi}$ 
is a true \hbox{$\bG$-martingale}.
Therefore, denoting by
$X^{\widehat\pi}$ the wealth process for any strategy $\widehat\pi\in\cA\left(\bG\right)$ and using the properties of the concave functions, we get
$$ U\left(X_T^{\pi}\right) \geq U(X_T^{\widehat\pi}) + \cY(x_0)D_T^{-1}Z_T\left(X_t^{\pi}-X_T^{\widehat \pi}\right)\ .$$
Taking expectation in both sides we have,
\begin{align*}
    \EE\left[ U\left(X_T^{\pi}\right)\right] &\geq \EE[U(X_T^{\widehat\pi})] + \EE\left[\cY(x_0) D_T^{-1}Z_T \left(X_T^{\pi}-X_T^{\widehat \pi}\right)\right]\\ 
    &= \EE[U(X_T^{\widehat\pi})] + \EE\left[\cY(x_0) D_T^{-1}Z_T \left(X_T^{\pi}-X_T^{\widehat \pi}\right)\right] \\
    &\geq \EE[U(X_T^{\widehat\pi})] + \cY(x_0)\left(x_0-\EE[ D_T^{-1}Z_T X_T^{\widehat \pi}]\right) \geq \EE[U(X_T^{\widehat\pi})]\ ,
\end{align*}
where, in the last step, we used that $D^{-1} Z X^{\pi}$ and 
$D^{-1} ZX^{\widehat\pi}$ are respectively a \hbox{$\bG$-martingale} and a \hbox{$\bG$-super} martingale, 
we finally denoted by $\pi^\bG$ the optimal strategy.
\end{proof}
\begin{Remark}
If the process $\alpha^\varepsilon$ satisfies the \emph{Novikov's condition}, i.e.,
$$ \EE\left[\exp\left(\frac{1}{2} \int_0^T\left(\alpha^\varepsilon_t\right)^2dt\right)\right]<+\infty\ , $$
then \emph{(NFLVR)}$_{\bG}$ holds true and the deflator defines an \emph{Equivalent Local Martingale Measure (ELMM)}, 
see \cite[Proposition 3.2]{Fontana2014}.
An example of enlargement satisfying this condition can be found in~\cite[Theorem 4.16]{DauriaSalmeron2020}.
\end{Remark}
In the following result, 
we give the value of the additional information of an agent playing with $\bG$ versus another who is totally unaware of the existence of such modulation.
\begin{Proposition}\label{prop.price}
If we consider the enlargement $\bF\subset\bG$, then the price of the information given by $\varepsilon$ under logarithmic utility is
\begin{align}\label{eq.gains.B}
    \bV_T^\bG-\bV_T^\bF =& -\sum_{k=0}^{n-1}\left(\PP(\varepsilon_{t_k}=1)\ln \PP(\varepsilon_{t_k}=1) + \PP(\varepsilon_{t_k}=0)\ln \PP(\varepsilon_{t_k}=0)\right)\notag\\ 
&+ (M_1-M_0)\int_0^T\EE[D_t \varepsilon_{t}] dt\ ,
\end{align}
where we have defined 
$M_1:=\dfrac{\eta_1-r_1}{\xi_1}\ ,\ M_0:=\dfrac{\eta_0-r_0}{\xi_0}\ .$
\end{Proposition}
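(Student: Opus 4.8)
The plan is to compute $\bV_T^\bG$ and $\bV_T^\bF$ separately under the logarithmic utility $U(x)=\ln x$ and take the difference. For the $\bG$-value, I would not go through the abstract dual formula of Theorem~\ref{theo.opt.complete}, but instead exploit the fact that under $U=\ln$ the optimal strategy is myopic. Applying the \Ito\, Lemma to $\ln X_t^\pi$ using the $\bG$-semimartingale decomposition $dW_t = d\widehat W_t + \alpha_t^\varepsilon\,dt$ from Corollary~\ref{cor.BM.G}, one gets
\begin{align*}
\ln X_T^\pi = \ln x_0 + \int_0^T\!\Big( r_{\varepsilon_t} + \pi_t(\eta_{\varepsilon_t}-r_{\varepsilon_t}) + \pi_t\xi_{\varepsilon_t}\alpha_t^\varepsilon - \tfrac12\pi_t^2\xi_{\varepsilon_t}^2\Big)dt + \int_0^T \pi_t\xi_{\varepsilon_t}\,d\widehat W_t\ .
\end{align*}
Pointwise maximization of the drift integrand over $\pi_t$ gives $\pi_t^\bG = (\eta_{\varepsilon_t}-r_{\varepsilon_t})/\xi_{\varepsilon_t}^2 + \alpha_t^\varepsilon/\xi_{\varepsilon_t} = \theta_t^\varepsilon/\xi_{\varepsilon_t}$, and substituting back yields $\bV_T^\bG = \ln x_0 + \EE\!\int_0^T\! r_{\varepsilon_t}\,dt + \tfrac12\EE\!\int_0^T (\theta_t^\varepsilon)^2\,dt$. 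The same computation with $\alpha^\varepsilon\equiv 0$ and strategy adapted to $\bF$ — but here one must be slightly careful, since an $\bF$-agent does not observe $\varepsilon_t$ — actually the $\bF$-agent problem is the one in which $\varepsilon$ is simply absent/unknown; following the paper's phrasing ("totally unaware of the existence of such modulation") I would treat the $\bF$-value as $\bV_T^\bF = \ln x_0 + \EE\!\int_0^T r_{\varepsilon_t}\,dt + \tfrac12\EE\!\int_0^T \big((\eta_{\varepsilon_t}-r_{\varepsilon_t})/\xi_{\varepsilon_t}\big)^2\,dt$, i.e. the Merton value with the regime-dependent market price of risk but no information drift. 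Thus the $\ln x_0$ and the interest-rate terms cancel, and
\[
\bV_T^\bG - \bV_T^\bF = \tfrac12\,\EE\!\int_0^T\Big[(\theta_t^\varepsilon)^2 - \big(\tfrac{\eta_{\varepsilon_t}-r_{\varepsilon_t}}{\xi_{\varepsilon_t}}\big)^2\Big]dt = \EE\!\int_0^T\Big[\tfrac{\eta_{\varepsilon_t}-r_{\varepsilon_t}}{\xi_{\varepsilon_t}}\,\alpha_t^\varepsilon + \tfrac12(\alpha_t^\varepsilon)^2\Big]dt\ .
\]

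Next I would evaluate the two resulting expectations. For the quadratic term, Corollary~\ref{cor.BM.G} already supplies the identity $\tfrac12\int_0^T\EE[(\alpha_t^\varepsilon)^2]\,dt = -\sum_{k=0}^{n-1} H(\varepsilon_{t_k})$, and for a binary variable $H(\varepsilon_{t_k}) = \PP(\varepsilon_{t_k}=1)\ln\PP(\varepsilon_{t_k}=1) + \PP(\varepsilon_{t_k}=0)\ln\PP(\varepsilon_{t_k}=0)$; this produces exactly the first (entropy) line of \eqref{eq.gains.B}. For the cross term, write $(\eta_{\varepsilon_t}-r_{\varepsilon_t})/\xi_{\varepsilon_t} = M_0 + (M_1-M_0)\varepsilon_t$ since $\varepsilon_t\in\{0,1\}$, so the integrand becomes $M_0\alpha_t^\varepsilon + (M_1-M_0)\varepsilon_t\alpha_t^\varepsilon$. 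The term $\EE[M_0\alpha_t^\varepsilon]$ vanishes: indeed $\alpha^\varepsilon$ is the information drift, and on each block $[t_k,t_{k+1})$ one has $\EE[\alpha_t^\varepsilon\,|\,\cF_t]=0$ (equivalently, $\widehat W$ being a $\bG$-Brownian motion forces $\alpha^\varepsilon$ to have zero $\bF$-conditional mean), hence $\EE[\alpha_t^\varepsilon]=0$. It remains to identify $\EE[\varepsilon_t\alpha_t^\varepsilon]$ with $\EE[D_t\varepsilon_t]$. Using the explicit formula for $\alpha_t^\varepsilon(k)$ in Theorem~\ref{alpha.binary}, namely $\alpha_t^\varepsilon(k) = \EE[D_t\varepsilon_{t_k}\,|\,\cF_t]\,(\varepsilon_{t_k}-\EE[\varepsilon_{t_k}\,|\,\cF_t])/\VV[\varepsilon_{t_k}\,|\,\cF_t]$, multiply by $\varepsilon_{t_k}$, take $\cF_t$-conditional expectation, and note that for a binary variable $\EE[\varepsilon_{t_k}(\varepsilon_{t_k}-\EE[\varepsilon_{t_k}\,|\,\cF_t])\,|\,\cF_t] = \VV[\varepsilon_{t_k}\,|\,\cF_t]$, so $\EE[\varepsilon_{t_k}\alpha_t^\varepsilon(k)\,|\,\cF_t] = \EE[D_t\varepsilon_{t_k}\,|\,\cF_t]$ and therefore $\EE[\varepsilon_{t_k}\alpha_t^\varepsilon(k)] = \EE[D_t\varepsilon_{t_k}]$. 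Summing the blocks and writing $\varepsilon_t=\varepsilon_{t_k}$ on $[t_k,t_{k+1})$ gives the second line $(M_1-M_0)\int_0^T\EE[D_t\varepsilon_t]\,dt$, completing the proof.

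The step I expect to be the main obstacle is making the computation of $\bV_T^\bF$ rigorous and consistent with the paper's intended meaning — i.e. justifying that the "uninformed" benchmark is exactly the regime-switching Merton value with no information drift, and that the myopic argument for $U=\ln$ applies cleanly in both filtrations (admissibility of $\pi^\bG=\theta^\varepsilon/\xi$, integrability of the stochastic integral so that it is a true martingale with zero mean, and finiteness of $\EE\int_0^T(\alpha_t^\varepsilon)^2\,dt$, which is however already guaranteed by Corollary~\ref{cor.BM.G}). A secondary technical point is the interchange of expectation and the block sum and the handling of the terminal time $t_n=T$, but the entropy identity from Corollary~\ref{cor.BM.G} already absorbs this. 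The identity $\EE[\alpha_t^\varepsilon]=0$ and the binary second-moment identity $\EE[\varepsilon_{t_k}^2\,|\,\cF_t]=\EE[\varepsilon_{t_k}\,|\,\cF_t]$ are the small computational facts that drive the cross-term evaluation, and once those are in place the rest is bookkeeping.
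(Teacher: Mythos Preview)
Your proposal is correct and follows the same overall architecture as the paper's proof: compute $\bV_T^\bG$ for the logarithmic utility, expand $(\theta_t^\varepsilon)^2$, identify the $\alpha$-free part as $\bV_T^\bF$, and then evaluate the entropy term via \cite[Theorem~4.1]{AmendingerImkellerSchweizer1998} and the cross term $\EE\big[\tfrac{\eta_{\varepsilon_t}-r_{\varepsilon_t}}{\xi_{\varepsilon_t}}\alpha_t^\varepsilon\big]$ separately. The paper obtains $\pi^\bG$ and $X_T^{\pi^\bG}=x_0D_TZ_T^{-1}$ from Theorem~\ref{theo.opt.complete} rather than by direct pointwise maximization, but this is cosmetic (and the pointwise argument you use is exactly what the paper itself does later in Example~\ref{Example.BM.increment}).

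The genuine difference is in the cross-term computation. The paper substitutes the \emph{second} representation of $\alpha_t^\varepsilon(k)$ from Theorem~\ref{alpha.binary} (the one with the stochastic integral $\int_t^{t_{k+1}}\EE[D_s\varepsilon_{t_k}\vert\cF_s]\,dW_s$), then replaces $\varepsilon_{t_k}$ by its Clark--Ocone expansion~\eqref{eq.clark.ocone}, and works through several lines of \Ito\ isometry manipulations together with the identity $\VV[\varepsilon_{t_k}\vert\cF_t]=\EE\big[\int_t^{t_{k+1}}(\EE[D_s\varepsilon_{t_k}\vert\cF_s])^2ds\,\vert\,\cF_t\big]$. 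Your route is shorter and more elementary: you use the \emph{first} representation in Theorem~\ref{alpha.binary}, take $\cF_t$-conditional expectation, and invoke only the binary identity $\EE[\varepsilon_{t_k}(\varepsilon_{t_k}-\EE[\varepsilon_{t_k}\vert\cF_t])\,\vert\,\cF_t]=\VV[\varepsilon_{t_k}\vert\cF_t]$ to get $\EE[\varepsilon_{t_k}\alpha_t^\varepsilon(k)\vert\cF_t]=\EE[D_t\varepsilon_{t_k}\vert\cF_t]$ in one line. Both handle the $M_0$ piece the same way (zero $\cF_t$-conditional mean of $\alpha_t^\varepsilon$, resp.\ of the future stochastic integral). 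The paper's route has the advantage of making the link with the Clark--Ocone kernel and the conditional-variance formula explicit; yours is cleaner and avoids any stochastic-integral bookkeeping. Your caveat about the precise meaning of $\bV_T^\bF$ is well placed: the paper simply \emph{defines} it implicitly by the same identification you make, without further justification.
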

\begin{proof}
As we consider $U(x) = \ln x$, from Theorem \ref{theo.opt.complete} we conclude that 
$$ X_T^{\pi^{\bG}} = x_0D_TZ_T^{-1}\ ,\quad \pi^\bG_t = \frac{\theta_t^\varepsilon}{\xi_{\varepsilon_t}} \ ,\quad 0\leq t \leq T\ . $$
The gains under the filtration $\bG$ at $T$ are
\begin{align*}
    \bV_T^\bG =& \EE\left[\ln X_T^{\pi^{\bG}}\right] = 
    \ln x_0 + \int_0^T \EE[r_{\varepsilon_t}] dt + \EE\left[\int_0^T \theta_t^\varepsilon d\widehat W_t\right] + \frac{1}{2}\int_0^T \EE\left[(\theta_t^{\varepsilon})^2\right]dt\\
    =& \ln x_0 + \int_0^T \EE[r_{\varepsilon_t}] dt  + \frac{1}{2}\int_0^T \EE\left[\left(\frac{\eta_{\varepsilon_t} - r_{\varepsilon_t}}{\xi_{\varepsilon_t}}\right)^2+\left(\alpha_t^\varepsilon \right)^2 + 2\frac{\eta_{\varepsilon_t} - r_{\varepsilon_t}}{\xi_{\varepsilon_t}}\alpha_t^\varepsilon \right]dt\\
    =& \bV_T^\bF  + \frac{1}{2}\int_0^T \EE\left[\left(\alpha_t^\varepsilon \right)^2\right]dt  +\int_0^T \EE\left[\frac{\eta_{\varepsilon_t} - r_{\varepsilon_t}}{\xi_{\varepsilon_t}}\alpha_t^\varepsilon \right]dt\ ,
\end{align*}
where we have used that 
$\theta^\varepsilon\in L^2(dt\times\PP,\bG)$ in order to conclude the zero expectation of the \Ito\, integral.
In order to rewrite the last term, we consider 
$$\frac{\eta_{\varepsilon_t}-r_{\varepsilon_t}}{\xi_{\varepsilon_t}} = (M_1-M_0)\varepsilon_{t_k} + M_0\ ,\quad t_k\leq t < t_{k+1}\ ,$$
with the previous definition of $M_1,M_0$.
Then we get,
\begin{align*}
    \int_0^T&\EE\left[\frac{\eta_{\varepsilon_t} - r_{\varepsilon_t}}{\xi_{\varepsilon_t}}\alpha_t^\varepsilon \right] dt 
    = \sum_{k=0}^{n-1}\int_{t_k}^{t_{k+1}}\EE\left[\frac{\eta_{\varepsilon_t} - r_{\varepsilon_t}}{\xi_{\varepsilon_t}}\alpha_t^\varepsilon(k) \right] dt\\
    =& \sum_{k=0}^{n-1}\int_{t_k}^{t_{k+1}}\EE\left[\left((M_1-M_0)\varepsilon_{t_k} + M_0\right)\alpha_t^\varepsilon(k) \right] dt\\
    =& \sum_{k=0}^{n-1}\int_{t_k}^{t_{k+1}}\EE\left[\left((M_1-M_0)\varepsilon_{t_k} + M_0\right)\frac{\EE[D_t \varepsilon_{t_k}\vert\cF_t]}{\VV[\varepsilon_{t_k}\vert \cF_t]}
    \int_{t}^{t_{k+1}} \EE[D_s \varepsilon_{t_k}\vert\cF_s]dW_s \right] dt\\
    =& \sum_{k=0}^{n-1}\int_{t_k}^{t_{k+1}}\EE\left[(M_1-M_0)\varepsilon_{t_k}\frac{\EE[D_t \varepsilon_{t_k}\vert\cF_t]}{\VV[\varepsilon_{t_k}\vert \cF_t]}
    \int_{t}^{t_{k+1}} \EE[D_s \varepsilon_{t_k}\vert\cF_s]dW_s \right] dt\\
    =& \sum_{k=0}^{n-1}\int_{t_k}^{t_{k+1}}\EE\left[(M_1-M_0)\left(\EE[\varepsilon_{t_k}] 
    + \int_{0}^{t_{k+1}}\EE[D_s\varepsilon_{t_k}\vert\cF_s]dW_s\right)\times\right.\\
    &\left. \times \frac{\EE[D_t \varepsilon_{t_k}\vert\cF_t]}{\VV[\varepsilon_{t_k}\vert \cF_t]}
    \int_{t}^{t_{k+1}} \EE[D_s \varepsilon_{t_k}\vert\cF_s]dW_s \right] dt\\
    =& \sum_{k=0}^{n-1}\int_{t_k}^{t_{k+1}}\EE\left[(M_1-M_0) \frac{\EE[D_t \varepsilon_{t_k}\vert\cF_t]}{\VV[\varepsilon_{t_k}\vert \cF_t]}
    \left(\int_{t}^{t_{k+1}} \EE[D_s \varepsilon_{t_k}\vert\cF_s]dW_s\right)^2 \right] dt\\
    =& \sum_{k=0}^{n-1}\int_{t_k}^{t_{k+1}}\EE\left[(M_1-M_0) \frac{\EE[D_t \varepsilon_{t_k}\vert\cF_t]}{\VV[\varepsilon_{t_k}\vert \cF_t]}\EE\left[
    \int_{t}^{t_{k+1}}\left( \EE[D_s \varepsilon_{t_k}\vert\cF_s]\right)^2ds\vert\cF_t\right] \right] dt\\
    =& (M_1-M_0)\sum_{k=0}^{n-1}\int_{t_k}^{t_{k+1}}\EE\left[\EE[D_t \varepsilon_{t_k}|\cF_t]\right] dt  = (M_1-M_0)\int_0^T\EE[D_t \varepsilon_{t}] dt \ , 
\end{align*}
where in the third equality we have used Theorem~\ref{alpha.binary}, in the fifth one the equation~\eqref{eq.clark.ocone} and in the eighth one we use that
\begin{align*}
    \VV[\varepsilon_{t_k}\vert \cF_t] =& \EE[(\varepsilon_{t_k}-\EE[\varepsilon_{t_k}\vert \cF_t])^2\vert\cF_t]  =  
    \EE\left[\left( 
    \int_{t}^{t_{k+1}} \EE[D_s \varepsilon_{t_k}\vert\cF_s]dW_s
    \right)^2\vert\cF_t\right]\\
    =& \EE\left[
    \int_{t}^{t_{k+1}}\left( \EE[D_s \varepsilon_{t_k}\vert\cF_s]\right)^2ds\vert\cF_t\right]\ .
\end{align*}
The rest of the equalities uses basic \Ito\,integral properties.
Finally, we compute
\begin{align*}
    \EE\left[\left(\alpha_t^\varepsilon \right)^2\right] &=
    \EE\left[\left(\sum_{k=0}^{n-1}\bOne_{\{t_k\leq t < t_{k+1}\}}\alpha^\varepsilon_t(k) \right)^2\right] = 
    \sum_{k=0}^{n-1}\bOne_{\{t_k\leq t < t_{k+1}\}}\EE\left[\left(\alpha^\varepsilon_t(k) \right)^2\right]
\end{align*}
and by applying 
\cite[Theorem 4.1]{AmendingerImkellerSchweizer1998},
we get the result.
\end{proof}
\begin{Remark}\label{rk.gains}
Although we assume that $\varepsilon$ must be $\cF_T$-measurable in order to use the Clark-Ocone formula \eqref{eq.clark.ocone}, 
in \cite{Fontana18PRP} it was shown that the predictable representation holds in initially enlarged filtrations satisfying the Jacod's hypothesis, 
so it can be trivially extended to these filtrations by considering the non-anticipative derivative instead the process $\EE[D_t\varepsilon_t\vert\cF_t]$ and taking the $\cF_t$-conditional version of \cite[Theorem 4.1]{Fontana18PRP}.
By using \cite[Lemma 5]{dauriaSalmeron2021SAA}, 
the following representation
$$ \int_0^T\EE[D_t \varepsilon_{t}] dt 
    = \PP(\varepsilon_{t_k}=1)\sum_{k=0}^{n-1}\int_{t_k}^{t_{k+1}}
    \EE\left[\alpha^\varepsilon_t(k)p_t^{\varepsilon,k}\right] dt $$
also holds.
\end{Remark}
\begin{Example}\label{Example.BM.increment}
We define 
$$ \varepsilon_{t_k} = \bOne_{\{W_{t_{k+1}} > W_{t_k}\}}\ ,\quad k\in\{0,1,...,n-1\}  \ .$$
Using the independent increments of $W$, 
it can be verified that $\varepsilon$ is a Markov process 
and we provide an explicit expression for the information drift in the next lemma.
\begin{Lemma}
    $$\alpha^e_t(k) := \frac{(-1)^{1+e}}{\sqrt{t_{k+1}-t}} \frac{\Phi'\left(\frac{W_{t_k}-W_t}{\sqrt{t_{k+1}-t}}\right)}{\PP(\varepsilon_{t_k} = e|W_t)}\ ,\quad e\in\{0,1\}\ ,\quad t_k\leq t < t_{k+1}\ ,$$
    where $\Phi(x):=\int_{-\infty}^x\frac{\exp(-y^2/2)}{\sqrt{2\pi}}dy$.
\end{Lemma}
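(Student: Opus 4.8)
The plan is to read $\alpha^e(k)$ off the representation \eqref{eq.Baudoin.represent}. Since the normalising constants $\PP(\varepsilon_{t_k}=e)$ are irrelevant there, that identity says precisely that on $[t_k,t_{k+1})$ the process $\alpha^e_\cdot(k)$ is the integrand in the \Ito\ martingale representation of the bounded $\bF$-martingale $t\mapsto\PP(\varepsilon_{t_k}=e\mid\cF_t)$, divided by that same conditional probability. So everything reduces to (i) computing this conditional probability in closed form and (ii) differentiating it.

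For (i) I would fix $k<n$ and $t\in[t_k,t_{k+1})$ and split $W_{t_{k+1}}-W_{t_k}=(W_t-W_{t_k})+(W_{t_{k+1}}-W_t)$. Since $W_{t_{k+1}}-W_t$ is centred Gaussian with variance $t_{k+1}-t$ and independent of $\cF_t$, a one-line Gaussian computation gives
\begin{equation*}
    M_t:=\PP(\varepsilon_{t_k}=1\mid\cF_t)=\PP\bigl(W_{t_{k+1}}-W_t>-(W_t-W_{t_k})\mid\cF_t\bigr)=\Phi\!\left(\frac{W_t-W_{t_k}}{\sqrt{t_{k+1}-t}}\right),
\end{equation*}
and hence $\PP(\varepsilon_{t_k}=0\mid\cF_t)=1-M_t=\Phi\bigl((W_{t_k}-W_t)/\sqrt{t_{k+1}-t}\bigr)$. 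For (ii) I would write $M_t=u(t,W_t-W_{t_k})$ with $u(t,y):=\Phi\bigl(y/\sqrt{t_{k+1}-t}\bigr)$, noting $d(W_t-W_{t_k})=dW_t$ on $[t_k,t_{k+1})$. Applying the \Ito\ Lemma and using that $M_t=\EE[\varepsilon_{t_k}\mid\cF_t]$ is a martingale, so that the finite-variation part of $du(t,W_t-W_{t_k})$ must vanish (one also checks directly that $u$ is space–time harmonic, $\partial_t u+\tfrac12\partial_{yy}u=0$), leaves
\begin{equation*}
    dM_t=\partial_y u(t,W_t-W_{t_k})\,dW_t=\frac{1}{\sqrt{t_{k+1}-t}}\,\Phi'\!\left(\frac{W_{t_k}-W_t}{\sqrt{t_{k+1}-t}}\right)dW_t,
\end{equation*}
where I used that $\Phi'$ is even. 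Dividing by $M_t$ yields the stated formula for $e=1$, where $(-1)^{1+e}=1$; for $e=0$ I would instead use $\PP(\varepsilon_{t_k}=0\mid\cF_t)=1-M_t$ together with $d(1-M_t)=-dM_t$, which is exactly the extra sign $(-1)^{1+e}$ at $e=0$, completing the verification.

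As a consistency check I would also note that the same expression drops out of Theorem~\ref{alpha.binary}: in the generalized white-noise sense $D_t\varepsilon_{t_k}=\delta_0(W_{t_{k+1}}-W_{t_k})\,\bOne_{\{t_k<t\le t_{k+1}\}}$, so $\EE[D_t\varepsilon_{t_k}\mid\cF_t]$ is the density at $0$ of a $\cN(W_t-W_{t_k},\,t_{k+1}-t)$ law, that is $\tfrac{1}{\sqrt{t_{k+1}-t}}\Phi'\bigl((W_t-W_{t_k})/\sqrt{t_{k+1}-t}\bigr)$, while $\VV[\varepsilon_{t_k}\mid\cF_t]=M_t(1-M_t)$; substituting $\varepsilon_{t_k}=e$ and simplifying $e-M_t$ in the formula of Theorem~\ref{alpha.binary} reproduces the claim. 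The one genuinely delicate point is the behaviour as $t\uparrow t_{k+1}$, where $1/\sqrt{t_{k+1}-t}$ and the ratio $\Phi'/\PP(\varepsilon_{t_k}=e\mid\cF_t)$ both blow up on the event that $\varepsilon_{t_k}$ takes the value opposite to the sign of $W_t-W_{t_k}$; but I would not need to control this by hand, since $L^2([t_k,t_{k+1}),dt\times\PP)$-integrability of $\alpha^\varepsilon$ — hence the legitimacy of all the steps above — is already guaranteed abstractly by the entropy bound invoked in Corollary~\ref{cor.BM.G}. Everything else is the routine Gaussian and \Ito\ computation sketched above.
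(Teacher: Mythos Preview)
Your argument is correct. The paper's own proof is a one-line appeal to \cite[Remark~6]{dauriaSalmeron2021SAA}, so you are not reproducing it but rather giving a self-contained derivation. Your primary route---compute the conditional probability $M_t=\Phi\bigl((W_t-W_{t_k})/\sqrt{t_{k+1}-t}\bigr)$ explicitly, apply \Ito's formula (the drift vanishing by martingality or by direct verification of the heat equation), and read off $\alpha^e(k)$ from the Baudoin representation~\eqref{eq.Baudoin.represent}---is entirely elementary and avoids both the external citation and any Malliavin machinery. Your secondary consistency check via Theorem~\ref{alpha.binary} is closer in spirit to what the cited remark presumably does, since that theorem is itself proved through the Clark--Ocone representation. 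The advantage of your main approach is that it is transparent and requires nothing beyond the Gaussian density and \Ito's lemma; the paper's route is shorter on the page but pushes the work into another reference.
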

\begin{proof}
It follows directly by \cite[Remark 6]{dauriaSalmeron2021SAA}.
\end{proof}
By applying the \Ito\, Lemma to \eqref{X.SDE} we obtain
$$\ln X_T^\pi = \ln x_0 + \int_0^T r_{\varepsilon_s} + \pi_s\left(\eta_{\varepsilon_k} - r_{\varepsilon_s} + \xi_{\varepsilon_s}\alpha^\varepsilon_s - \frac{1}{2}\xi^2_{\varepsilon_s} \pi_s \right) ds + \int_0^T \xi_{\varepsilon_s}\pi_s d\widehat W_s\ . $$
As $\widehat W$ is a $\bG$-Brownian motion, the expectation of the utility is reduced to
$$\EE\left[\ln X_T^\pi\right] = \ln x_0 + \int_0^T \EE\left[r_{\varepsilon_s} + \pi_s\left(\eta_{\varepsilon_k} - r_{\varepsilon_s} + \xi_{\varepsilon_s}\alpha^\varepsilon_s - \frac{1}{2}\xi^2_{\varepsilon_s} \pi_s \right) \right] ds \ . $$
Following the lines of \cite[Theorem 16.54]{DiNunnoOksendalProske2009}, 
 we maximize for each \hbox{$(s,\omega)\in[0,T]\times\Omega$} the functional
    $$ J(\pi) := r_{\varepsilon_s} + \pi\left(\eta_{\varepsilon_s} - r_{\varepsilon_s} + \xi_{\varepsilon_s}\alpha^\varepsilon_s - \frac{1}{2}\xi^2_{\varepsilon_s} \pi \right) \ . $$
    Then, a stationary point of $J$ satisfies
    $0 = J'(\pi) = \eta_{\varepsilon_s} - r_{\varepsilon_s} + \xi_{\varepsilon_s}\alpha^\varepsilon_s - \xi^2_{\varepsilon_s} \pi ,$
   and we obtain the candidate
    \begin{equation}
        \pi^\bG_t =  \frac{\eta_{\varepsilon_t}}{\xi^2_{\varepsilon_t}} +\frac{\alpha^{\varepsilon}_t}{\xi_{\varepsilon_t}} =
        \sum_{k=0}^{n-1}\bOne_{\{k\leq t < k+1\}}\left( \frac{\eta_{\varepsilon_{t_k}}}{\xi^2_{\varepsilon_{t_k}}} +\frac{\alpha^{\varepsilon}_{t_k}(k)}{\xi_{\varepsilon_{t_k}}} \right)\ .
    \end{equation}
    Since $J$ is concave, the strategy $\pi^\bG$ is a maximum of the optimization problem.
\end{Example}
\begin{Example}\label{Ex.max}
Let $M=\prT{M}$ be the running maximum of $W$, i.e., 
$$M_t:=\sup_{0\leq s\leq t} W_s\ .$$
We use the Markov property of the process $M_t-W_t$ in order to define the following Markov chain $\varepsilon$,
\begin{equation}\label{ex.drawdown}
    \varepsilon_{t_k} = \bOne_{ \{M_{t_{k+1}}-W_{t_{k+1}}> c \}}  \ ,\quad c\in\bR^+\ ,\quad k\in\{0,1,...,n-1\}\ .
\end{equation}
As we have seen in the Example \ref{Example.BM.increment}, 
the optimal strategy under the logarithmic utility is fully determined by the information drift $\alpha^\varepsilon$, 
which is computed in the next lemma.
    \begin{Lemma}
       $$\alpha^e_t(k) := 
       \frac{(-1)^{1+e}}{\sqrt{t_{k+1}-t}}
       \frac{\Phi'\left(\frac{c+M_t-W_t}{\sqrt{t_{k+1}-t}}\right) - 
             \Phi'\left(\frac{c-M_t+W_t}{\sqrt{t_{k+1}-t}}\right)}{\PP(\varepsilon_{t_k} = e|\cF_t)}\ ,\quad e\in\{0,1\}\ ,\quad t_k\leq t < t_{k+1}\ . $$
    \end{Lemma}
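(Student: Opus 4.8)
The plan is to compute the $\bF$-conditional law of $\varepsilon_{t_k}$ on $[t_k,t_{k+1})$ in closed form and then differentiate it, exactly as for the previous lemma. Write $Y_t:=M_t-W_t$; as recalled in the statement $Y$ is Markov (it is a reflected Brownian motion), so $\varepsilon_{t_k}$ depends on $\cF_{t_{k+1}}$ only through $Y_{t_{k+1}}$. Fix $k$ and $t\in[t_k,t_{k+1})$, set $h:=t_{k+1}-t$, and let $\tilde W_u:=W_{t+u}-W_t$, a Brownian motion independent of $\cF_t$ with running maximum $\tilde M_u$. Then
\[ M_{t_{k+1}}-W_{t_{k+1}} = \max\!\big(M_t,\,W_t+\tilde M_h\big)-\big(W_t+\tilde W_h\big) = \max(Y_t,\tilde M_h)-\tilde W_h . \]
First I would observe that, as a process in $u$, $\max(Y_t,\tilde M_u)-\tilde W_u$ is a Brownian motion started at $Y_t\ge 0$ and reflected at the origin; integrating its transition kernel against $\bOne_{(c,\infty)}$ gives
\[ \PP(\varepsilon_{t_k}=1\vert\cF_t)=2-\Phi\!\left(\frac{c-Y_t}{\sqrt h}\right)-\Phi\!\left(\frac{c+Y_t}{\sqrt h}\right)=:G(Y_t,h), \]
and $\PP(\varepsilon_{t_k}=0\vert\cF_t)=1-G(Y_t,h)$.

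Next I would read off the information drift from the density process. By \eqref{eq.Baudoin.represent} the $\bF$-martingale $p^{e,k}$ obeys $dp^{e,k}_t=p^{e,k}_t\,\alpha^e_t(k)\,dW_t$, so it is enough to identify the Brownian coefficient in the semimartingale decomposition of $\PP(\varepsilon_{t_k}=e\vert\cF_t)=\PP(\varepsilon_{t_k}=e)\,p^{e,k}_t$ and divide by $\PP(\varepsilon_{t_k}=e\vert\cF_t)$; equivalently, this coefficient is $\EE[D_t\varepsilon_{t_k}\vert\cF_t]$ in the sense of Remark~\ref{rk.gains}, and one may feed it into Theorem~\ref{alpha.binary}. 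Applying \Ito's formula to $G(Y_t,t_{k+1}-t)$, viewing $Y$ as a continuous semimartingale with bounded-variation part $M$ and $d\langle Y\rangle_t=dt$, the $dW_t$-coefficient of $\PP(\varepsilon_{t_k}=1\vert\cF_t)$ is $-\partial_yG(Y_t,t_{k+1}-t)$, and a one-line differentiation of $\Phi$ gives
\[ -\partial_yG(y,h)=\frac{1}{\sqrt h}\left(\Phi'\!\left(\frac{c+y}{\sqrt h}\right)-\Phi'\!\left(\frac{c-y}{\sqrt h}\right)\right). \]
Substituting $y=Y_t=M_t-W_t$ and dividing by $\PP(\varepsilon_{t_k}=1\vert\cF_t)$ yields the stated formula for $e=1$; for $e=0$ the extra minus sign coming from $1-G$ produces the factor $(-1)^{1+e}$. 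The $dt$-terms need not be computed: they must cancel, since $\PP(\varepsilon_{t_k}=e\vert\cF_t)$ is an $\bF$-martingale.

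The step that requires care — and which I expect to be the main obstacle — is the contribution of $dM_t$ in $dY_t$, namely the bounded-variation term $\partial_yG(Y_t,t_{k+1}-t)\,dM_t$ in \Ito's formula. This term must vanish, and it does for the right reason: $M$ increases only on $\{Y_t=0\}$, while $\partial_yG(0,h)=\frac{1}{\sqrt h}\big(\Phi'(c/\sqrt h)-\Phi'(c/\sqrt h)\big)=0$, which is precisely the Neumann boundary condition satisfied by the transition kernel of reflected Brownian motion at the reflecting barrier. Once this is granted, everything else is the elementary differentiation of $\Phi$ done above. Alternatively, the whole computation is subsumed by the analogue of \cite[Remark 6]{dauriaSalmeron2021SAA} for the reflected process $M-W$ combined with Theorem~\ref{alpha.binary}, in which case the only thing left to verify is the Markov structure of $M-W$ and that $\varepsilon$ inherits it.
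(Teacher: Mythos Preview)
Your proof is correct and follows essentially the same approach as the paper: both compute the closed-form expression $\PP(\varepsilon_{t_k}=1\vert\cF_t)=\overline\Phi\!\big((c-Y_t)/\sqrt{t_{k+1}-t}\big)+\overline\Phi\!\big((c+Y_t)/\sqrt{t_{k+1}-t}\big)$, apply \Ito's formula, kill the $dM_t$-contribution via $\partial_yG(0,h)=0$ on $\{Y_t=0\}$, and read off $\alpha^e_t(k)$ from \eqref{eq.Baudoin.represent}. The only cosmetic difference is that you obtain $G$ by invoking the reflected Brownian transition kernel while the paper integrates the joint density of $(M_{t,t_{k+1}}-W_t,\,W_{t_{k+1}}-W_t)$ directly, and you dismiss the $dt$-terms by the martingale property whereas the paper verifies the heat equation $\partial_t f+\tfrac12\partial_{KK}f=0$ explicitly.
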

    \begin{proof}
    We proceed as before, by computing 
        \begin{equation*}
            p_t^{1,k} = \frac{\PP(M_{t_{k+1}}-W_{t_{k+1}} > c |\cF_t)}{\PP(M_{t_{k+1}}- W_{t_{k+1}} > c)} \ ,
        \end{equation*}
        we operate separately with the numerator, looking at a more explicit expression. 
        We denote by  
        $M_{s,t}:=\sup_{u\in[s,t]} W_u$ and $a\vee b:=\max\{a,b\}$, then
        \begin{align*}
            \PP(M_{t_{k+1}}-& W_{t_{k+1}} > c |\cF_t) = \PP(M_t\vee M_{t,t_{k+1}}\, -W_{t_{k+1}} > c |\cF_t)\\ 
            =& \PP(M_t-W_{t_{k+1}} > c ,\, M_t>M_{t,t_{k+1}}|\cF_t)\\
            &+\PP(M_{t,t_{k+1}}-W_{t_{k+1}} > c ,\, M_t\leq M_{t,t_{k+1}}|\cF_t)\\
            =& \PP(M_t-W_t-(W_{t_{k+1}}-W_t) > c ,\, M_t-W_t>M_{t,t_{k+1}}-W_t|\cF_t)\\
            &+\PP(M_{t,t_{k+1}}-W_t-(W_{t_{k+1}}-W_t) > c ,\, M_t-W_t\leq M_{t,t_{k+1}}-W_t|\cF_t)\\
            =& \PP(M_t-W_t-(W_{t_{k+1}}-W_t) > c ,\, M_t-W_t>M_{t,t_{k+1}}-W_t)\\
            &+\PP(M_{t,t_{k+1}}-W_t-(W_{t_{k+1}}-W_t) > c ,\, M_t-W_t\leq M_{t,t_{k+1}}-W_t)\ .
        \end{align*}
        We define $K_t=M_t-W_t$, $X=M_{t,t_{k+1}}-W_t$ and $Y=W_{t_{k+1}}-W_t$.
        In~\cite[Proposition~8.1]{karatzas1991brownian} it is stated that the vector $(X,Y)$ has the following density function,
        $$p(x,y,t_{k+1}-t) =\frac{2(2x-y)}{\sqrt{2\pi(t_{k+1}-t)^3}} \exp\left(-\frac{(2x-y)^2}{2(t_{k+1}-t)}\right)\ ,\quad x> y\vee 0\ . $$
        Then,
        \begin{align*}
             \PP(M_{t_{k+1}}-W_{t_{k+1}} > c |\cF_t) =& \PP(Y < {K_t}-c ,\, X<{K_t}) + \PP(X-Y > c ,\, X\geq {K_t})\\
             =&\int_0^{K_t}\int_{-\infty}^{{K_t}-c} p(x,y,t_{k+1}-t) dydx\\ 
             &+ \int_{K_t}^{+\infty}\int_{-\infty}^{x-c} p(x,y,t_{k+1}-t) dydx\\
            =& 2\int_0^{K_t}  \frac{\Phi'\left( \frac{2x-{K_t}+c}{\sqrt{t_{k+1}-t}} \right)}{\sqrt{t_{k+1}-t}} dx+ 2\int_{K_t}^{+\infty}\frac{\Phi'\left( \frac{x+c}{\sqrt{t_{k+1}-t}} \right)}{\sqrt{t_{k+1}-t}} dx \\
            =& \int_{c-K_t}^{c+K_t}  \frac{\Phi'\left( \frac{z}{\sqrt{t_{k+1}-t}} \right)}{\sqrt{t_{k+1}-t}} dz+ 2\int_{c+K_t}^{+\infty}\frac{\Phi'\left( \frac{z}{\sqrt{t_{k+1}-t}} \right)}{\sqrt{t_{k+1}-t}} dz \\
            =& \overline{\Phi}\left(\frac{c-K_t}{\sqrt{t_{k+1}-t}}\right) + 
             \overline{\Phi}\left(\frac{c+K_t}{\sqrt{t_{k+1}-t}}\right) \ .
        \end{align*}
        By defining
        $f(t,K_t):= \PP(M_{t_{k+1}}-W_{t_{k+1}} > c |\cF_t)$
        we aim to compute $df(t,K_t)$ via \Ito\, Lemma.
        We note that the partial derivatives satisfy the following
        \begin{align*}
            \frac{\partial}{\partial K}f(t,K_t) &= \frac{1}{\sqrt{t_{k+1}-t}}\Phi'\left(\frac{c-K_t}{\sqrt{t_{k+1}-t}}\right)            -\frac{1}{\sqrt{t_{k+1}-t}}\Phi'\left(\frac{c+K_t}{\sqrt{t_{k+1}-t}}\right)\\
           0 &= \frac{\partial}{\partial t}f(t,K_t)+\frac{1}{2} \frac{\partial^2}{\partial K^2}f(t,K_t) \ ,
        \end{align*}
        and we conclude
        $$ df(t,K_t) =
        \frac{\partial}{\partial K}f(t,K_t) dK_t \ .$$
        Note that the process $M$ is increasing only in the set $\{t,\,K_t = 0\}$ and then we get 
        \hbox{$\frac{\partial}{\partial K}f(t,0)dM_t = 0$}. 
        We have proved the following relationship
         $$ df(t,K_t) 
         = - \frac{\partial}{\partial K}f(t,K_t) dW_t = \frac{\Phi'\left(\frac{c+K_t}{\sqrt{t_{k+1}-t}}\right) - \Phi'\left(\frac{c-K_t}{\sqrt{t_{k+1}-t}}\right)}{\sqrt{t_{k+1}-t}} dW_t\ . $$
         We conclude the following representation
         $$ dp_t^{1,k} = p_t^{1,k} \frac{\Phi'\left(\frac{c+K_t}{\sqrt{t_{k+1}-t}}\right) - \Phi'\left(\frac{c-K_t}{\sqrt{t_{k+1}-t}}\right)}{\sqrt{t_{k+1}-t}\,\PP(\varepsilon_{t_k} =  1 |\cF_t)} dW_t $$
        and by using \eqref{eq.Baudoin.represent} the result follows when $e=1$.
        We repeat the same reasoning when $e=0$.
    \end{proof}
\end{Example}
\begin{Example}\label{Ex.pathwise}
        We consider the enlargement given by the Markov chain
        $$ \varepsilon_{t_k} = \bOne_{\{W_s\leq B_k\, ,\, t_k\leq s < t_{k+1}\}}  \ ,\quad k\in\{0,1,...,n-1\}\ .$$
        We assume that $B_k$ is $\cF_{t_k}$-measurable and it can be different for any $k$. 
        Then, the random variables $\{\varepsilon_{t_k}\}_k$ are independent.
        This example has been introduced in \cite{baudoin2003}, in the Section 3.3 by the name of 
        \emph{Pathwise Conditioning}.
        \begin{Lemma}
        $$\alpha_t^\varepsilon(k) :=  \left \{ 
        \begin{matrix} 
        -f_{t,t_{k+1}}(B_k)
      \frac{\varepsilon_{t_k}-\PP(\varepsilon_{t_k}=1|\cF_t)} {\PP(\varepsilon_{t_k}=1|\cF_t)\PP(\varepsilon_{t_k}=0|\cF_t)} & \mbox{ if } M_{t_k,t}\leq B_k\\ 
      \\
        0 & \mbox{ if } M_{t_k,t} > B_k
        \end{matrix}
        \right. $$
      where we compute $\PP(\varepsilon_{t_k}=0|\cF_t) = 1 - \PP(\varepsilon_{t_k}=1|\cF_t)$ and 
      \begin{align*}
          f_{t,t_{k+1}}(m) &= \frac{2}{ \sqrt{t_{k+1}-t}}\Phi'\left(\frac{m-W_t}{\sqrt{t_{k+1}-t}}\right)\ ,\quad m\geq W_t\\
          \PP(\varepsilon_{t_k}=1|\cF_t) &=  \bOne_{\{M_{t_k,t}\leq B_k\}} \int_{W_t}^{B_k} f_{t,t_{k+1}}(m)dm \ .
      \end{align*}
        \end{Lemma}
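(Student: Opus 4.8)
The plan is to follow the same route as Example~\ref{Ex.max}: compute the conditional law $\PP(\varepsilon_{t_k}=1\vert\cF_t)$ in closed form, identify the Itô representation of the $\bF$-martingale $p^{1,k}$, and then extract the information drift through the Baudoin identity~\eqref{eq.Baudoin.represent}. Since $W$ has continuous paths, up to a $\PP$-null set $\{\varepsilon_{t_k}=1\}=\{M_{t_k,t_{k+1}}\le B_k\}$. Writing $M_{t_k,t_{k+1}}=M_{t_k,t}\vee M_{t,t_{k+1}}$, noting that $\{M_{t_k,t}\le B_k\}\in\cF_t$ and that $B_k$ is $\cF_{t_k}$- hence $\cF_t$-measurable for $t\ge t_k$, and using that conditionally on $\cF_t$ the maximum $M_{t,t_{k+1}}$ has density $f_{t,t_{k+1}}$ on $[W_t,\infty)$ (reflection principle, cf.\ \cite[Proposition~8.1]{karatzas1991brownian}), one gets
\begin{align*}
\PP(\varepsilon_{t_k}=1\vert\cF_t)
&=\bOne_{\{M_{t_k,t}\le B_k\}}\,\PP(M_{t,t_{k+1}}\le B_k\vert\cF_t)\\
&=\bOne_{\{M_{t_k,t}\le B_k\}}\int_{W_t}^{B_k}f_{t,t_{k+1}}(m)\,dm
 =\bOne_{\{M_{t_k,t}\le B_k\}}\Bigl(2\Phi\bigl(\tfrac{B_k-W_t}{\sqrt{t_{k+1}-t}}\bigr)-1\Bigr),
\end{align*}
and $\PP(\varepsilon_{t_k}=0\vert\cF_t)=1-\PP(\varepsilon_{t_k}=1\vert\cF_t)$, i.e.\ the two expressions stated in the lemma.

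Next I would pass to the stochastic differential of $p_t^{1,k}=\PP(\varepsilon_{t_k}=1\vert\cF_t)/\PP(\varepsilon_{t_k}=1)$. Set $F(t,x):=2\Phi\bigl((B_k-x)/\sqrt{t_{k+1}-t}\bigr)-1$, so that $p^{1,k}_t=\bOne_{\{M_{t_k,t}\le B_k\}}F(t,W_t)/\PP(\varepsilon_{t_k}=1)$. A direct computation gives $\partial_tF+\tfrac12\partial_{xx}F=0$ for $t<t_{k+1}$ and $\partial_xF(t,W_t)=-\tfrac{2}{\sqrt{t_{k+1}-t}}\Phi'\bigl((B_k-W_t)/\sqrt{t_{k+1}-t}\bigr)=-f_{t,t_{k+1}}(B_k)$. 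Applying Itô's formula to $\bOne_{\{M_{t_k,t}\le B_k\}}F(t,W_t)$, I would verify that the $dM_{t_k,\cdot}$-term drops out (because $F$ does not depend on the running maximum and $M_{t_k,\cdot}$ grows only on $\{W_t=M_{t_k,t}\}$) and that the indicator produces no jump (because at the first time $M_{t_k,\cdot}$ reaches $B_k$ one has $W_t=B_k$ and $F(t,B_k)=2\Phi(0)-1=0$). Hence $p^{1,k}$ is a continuous $\bF$-martingale, constant and equal to $0$ after that time, with
\begin{align*}
dp_t^{1,k}
&=p_t^{1,k}\,\frac{\partial_xF(t,W_t)}{\PP(\varepsilon_{t_k}=1\vert\cF_t)}\,\bOne_{\{M_{t_k,t}\le B_k\}}\,dW_t\\
&=-\,p_t^{1,k}\,\frac{f_{t,t_{k+1}}(B_k)}{\PP(\varepsilon_{t_k}=1\vert\cF_t)}\,\bOne_{\{M_{t_k,t}\le B_k\}}\,dW_t.
\end{align*}

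Comparing the last display with \eqref{eq.Baudoin.represent}, $dp_t^{e,k}=p_t^{e,k}\alpha_t^e(k)\,dW_t$, yields $\alpha_t^1(k)=-f_{t,t_{k+1}}(B_k)/\PP(\varepsilon_{t_k}=1\vert\cF_t)$ on $\{M_{t_k,t}\le B_k\}$ and $\alpha_t^1(k)=0$ otherwise. Repeating the computation for $p^{0,k}$, using $\partial_x(1-F)(t,W_t)=f_{t,t_{k+1}}(B_k)$, gives $\alpha_t^0(k)=f_{t,t_{k+1}}(B_k)/\PP(\varepsilon_{t_k}=0\vert\cF_t)$ on the same set and $0$ otherwise. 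Finally, writing $p:=\PP(\varepsilon_{t_k}=1\vert\cF_t)$ and recalling $\alpha^\varepsilon_t(k)=\alpha^{\cdot}_t(k)\circ\varepsilon_{t_k}$, the two values collapse into the stated formula via the elementary identity $\tfrac{\varepsilon_{t_k}-p}{p(1-p)}=\tfrac1p$ if $\varepsilon_{t_k}=1$ and $-\tfrac1{1-p}$ if $\varepsilon_{t_k}=0$.

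The hard part will be step two: making rigorous that the Itô decomposition of $\bOne_{\{M_{t_k,t}\le B_k\}}F(t,W_t)$ carries neither a local-time contribution on the diagonal $\{W_t=M_{t_k,t}\}$ nor a jump at the crossing time, so that $p^{1,k}$ is genuinely the continuous $\bF$-martingale with the martingale part written above. Everything else is a routine computation fully parallel to Example~\ref{Ex.max}.
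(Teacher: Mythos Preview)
Your argument is correct and complete in outline; the delicate point you flag (continuity of $\bOne_{\{M_{t_k,t}\le B_k\}}F(t,W_t)$ across the hitting time, thanks to $F(\cdot,B_k)=0$) is exactly what makes the It\^o computation go through without a local-time or jump term. One small wording issue: for $p^{0,k}$ you should write $\PP(\varepsilon_{t_k}=0\vert\cF_t)=1-\bOne_{\{M_{t_k,t}\le B_k\}}F(t,W_t)$ rather than $\bOne_{\{M_{t_k,t}\le B_k\}}(1-F)$; the differential is then simply $-d(\bOne F)$ and your formula for $\alpha^0_t(k)$ follows.

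The paper takes a different route. Instead of differentiating the conditional probability directly, it invokes Theorem~\ref{alpha.binary}, which expresses $\alpha^\varepsilon_t(k)$ in terms of $\EE[D_t\varepsilon_{t_k}\vert\cF_t]$, and then computes that Malliavin derivative via Bermin's formula $D_t\bOne_{\{M_{t_k,t_{k+1}}\le B_k\}}=-\delta_{B_k}(M_{t_k,t_{k+1}})\bOne_{\{M_{t_k,t}\le M_{t,t_{k+1}}\}}$, after which the conditional expectation is an integral against the known density $f_{t,t_{k+1}}$. Your approach trades the Malliavin machinery for an elementary It\^o argument, at the cost of having to justify the absence of singular terms at the boundary; the paper's approach hides that analytic work inside Bermin's result and the general Theorem~\ref{alpha.binary}. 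Both reach the same formula with comparable effort, and your method has the advantage of being self-contained once the conditional law is known.
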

        \begin{proof}
        We proceed by applying the Theorem \ref{alpha.binary}. 
        Note that $\varepsilon$ can be expressed in terms of the running maximum of $W$ as follows,
        \hbox{$\varepsilon_{t_k} = \bOne_{\{M_{t_k,t_{k+1}}\leq B_k\}}.$}
        Then, we compute the Malliavin derivative 
        by applying \cite[Corollary 5.3]{Bermin02}, 
        as it is argued in the Example 5.3 of the same reference, we have
        \begin{equation*}
            D_t \bOne_{\{M_{t_k,t_{k+1}}\leq B_k\}} = -\delta_{B_k}(M_{t_k,t_{k+1}}) \bOne_{\{ M_{t_k,t}\leq M_{t,t_{k+1}} \}}\ .
        \end{equation*}
        And the conditional expectation is computed as follows,
        \begin{align*}
            \EE[D_t \bOne_{\{M_{t_k,t_{k+1}}\leq B_k\}}|\cF_t] &= -\EE[\delta_{B_k}(M_{t_k,t_{k+1}})  \bOne_{\{ M_{t_k,t}\leq M_{t,t_{k+1}} \}}|\cF_t] \\
            &= -\EE[\delta_{B_k}(M_{t,t_{k+1}})  \bOne_{\{ M_{t_k,t}\leq M_{t,t_{k+1}} \}}|\cF_t]\\
            &= -\int_{M_{t_k,t}}^{+\infty} \delta_{B_k}(m) f_{t,t_{k+1}}(m)dm = -\bOne_{\{M_{t_k,t}\leq B_k\}}\, f_{t,t_{k+1}}(B_k)\ ,
        \end{align*}
        where $f_{t,t_{k+1}}$ is the $\cF_t$-conditional density function of $M_{t,t_{k+1}}$.
        To end the example, we compute the conditional probabilities as follows,
        \begin{align*}
            \PP(\varepsilon_{t_k}=1|\cF_t) =& \PP(M_{t_k,t_{k+1}}\leq B_k|\cF_s) = \PP(M_{{t_k},t}\leq B_k, M_{t,t_{k+1}}\leq B_k|\cF_t)\\
            =& \bOne_{\{M_{{t_k},t}\leq B_k\}}\PP( M_{t,t_{k+1}}\leq B_k|\cF_t)
            = \bOne_{\{M_{{t_k},t}\leq B_k\}} \int_{W_t}^{B_k} f_{t,t_{k+1}}(m) 
            dm\ .
        \end{align*}  
        \end{proof}
\end{Example}
\begin{Example}
        We consider a sequence  $L_0,L_1,...,L_n$ of $\cF_T$-independent
        binary random variables with parameters $p_0,p_1,...,p_n$,
        and we define the Markov chain
        $$ \widetilde\varepsilon_{t_k} = L_k\varepsilon_{t_k}\ , \quad k\in\{0,1,...,n\}\ ,$$
        being $\varepsilon_{t_k}$ any of the Examples 
        \ref{Example.BM.increment}, 
        \ref{Ex.max} and \ref{Ex.pathwise}.
        In this case the information of the agent is disturbed by some additional source of noises, provided by the binary random variables $\{L_j\}_{j=1}^n$.
        Despite the presence of such noise, not $\cF_T$-measurable,
        the market maintains its completeness, 
        as proved in \cite[Proposition 3.1]{Fontana18PRP} for initial enlargements satisfying the Jacod's hypothesis.
        Then, we compute
        \begin{align}\label{eq.p.disturb}
             \widetilde p_t^{1,k} &= \frac{\PP(\widetilde\varepsilon_{t_k} = 1|\cF_t)}{\PP(\widetilde\varepsilon_{t_k} = 1)} = \frac{\EE\left[\widetilde\varepsilon_{t_k}|\cF_t\right]}{\PP(\widetilde\varepsilon_{t_k} = 1)} = \frac{\EE\left[L_k \varepsilon_{t_k} |\cF_t\right]}{\PP(\widetilde\varepsilon_{t_k} = 1)} = 
             \frac{\EE\left[\EE\left[L_k \varepsilon_{t_k}|\cF_{t_{k+1}}\right]|\cF_t\right]}{\PP(\widetilde\varepsilon_{t_k} = 1)}\notag \\
             &= \frac{\EE\left[\varepsilon_{t_k} \EE\left[L_k \right]|\cF_t\right]}{\PP(\widetilde\varepsilon_{t_k} = 1)} 
             = p_k \frac{\EE\left[\varepsilon_{t_k} |\cF_t\right]}{\PP(\widetilde\varepsilon_{t_k} = 1)} = p_k \frac{\PP(\varepsilon_{t_k} = 1)}{\PP(\widetilde\varepsilon_{t_k} = 1)}p_t^{1,k} = p_t^{1,k}\ ,
        \end{align}
        we write the differential as
        $d\widetilde p_t^{1,k} = 
        dp_t^{1,k} $
        and we immediately have that
        \begin{equation}\label{eq.tilde.alpha}
             \widetilde\alpha^1_t(k) = 
             \alpha_t^1(k) \ ,
        \end{equation}
        We conclude that the information of $\{\widetilde\varepsilon_{t_k} = 1\}$ is the same as in $\{\varepsilon_{t_k} = 1\}$ because 
        $\{\widetilde\varepsilon_{t_k} = 1\}\subset\{\varepsilon_{t_k} = 1\}$.
        We can then argue in the same way to get~$\widetilde\alpha^0_t(k)$ and we get
        \begin{align*}
             \widetilde p_t^{0,k} &= \frac{\PP(\widetilde\varepsilon_{t_k} = 0|\cF_t)}{\PP(\widetilde\varepsilon_{t_k} = 0)} = \frac{1-\PP(\widetilde\varepsilon_{t_k} = 1|\cF_t)}{\PP(\widetilde\varepsilon_{t_k} = 1)} = \frac{1-p_k\PP(\varepsilon_{t_k} = 1|\cF_t)}{\PP(\widetilde\varepsilon_{t_k} = 1)}\\ 
             &= 
             \frac{1-p_k+p_k\PP(\varepsilon_{t_k} = 0|\cF_t)}{\PP(\widetilde\varepsilon_{t_k} = 1)} =
             \frac{1-p_k}{\PP(\widetilde\varepsilon_{t_k} = 1)}
             +p_k\frac{\PP(\varepsilon_{t_k} = 0)}{\PP(\widetilde\varepsilon_{t_k} = 1)}p_t^{0,k}\ ,
        \end{align*}
        and we write the differential as
        $$d\widetilde p_t^{0,k} = p_k\frac{\PP(\varepsilon_{t_k} = 0)}{\PP(\widetilde\varepsilon_{t_k} = 1)}dp_t^{0,k} $$
        and we conclude
        \begin{align*}
            \frac{d\widetilde p_t^{0,k}}{\widetilde p_t^{0,k}} &= p_k\frac{\PP(\varepsilon_{t_k} = 0)}{\PP(\widetilde\varepsilon_{t_k} = 1)}\frac{dp_t^{0,k}}{\widetilde p_t^{0,k}} = p_k\frac{\PP(\varepsilon_{t_k} = 0)}{\PP(\widetilde\varepsilon_{t_k} = 1)}\frac{ p_t^{0,k}}{ \widetilde p_t^{0,k}}\frac{dp_t^{0,k}}{ p_t^{0,k}} = p_k\frac{\PP(\varepsilon_{t_k} = 0|\cF_t)}{\PP(\widetilde\varepsilon_{t_k} = 1)\widetilde p_t^{0,k}}\frac{dp_t^{0,k}}{ p_t^{0,k}} \\
            &= p_k\frac{\PP(\varepsilon_{t_k} = 0|\cF_t)}{\PP(\widetilde\varepsilon_{t_k} = 1)\widetilde p_t^{0,k}}\alpha^0_t(k) dW_t =: \widetilde\alpha^0_t(k) dW_t \ .
        \end{align*}
        Let~$\widetilde\bG$ be the filtration enlarged with the Markov chain $\widetilde\varepsilon$.
        We aim to compare the additional logarithmic price of the information between~$\widetilde\bG$ and~$\bF$. 
        According to Remark~\ref{rk.gains},~\cite{Fontana18PRP} assures the representation of $\widetilde\varepsilon$, in this case as
        $$\widetilde\varepsilon_{t_k} = L_k\EE[\varepsilon_{t_k}]
        + \int_0^{t_{k+1}} L_k \EE[D_t \varepsilon_{t_k}\vert\cF_t] dW_t\ .$$
        Finally, when we use the~\cite[Theorem 4.1]{AmendingerImkellerSchweizer1998}, we apply the non $\cF_T$-measurable statement and we conclude that
        \begin{align*}
            \bV_T^{\widetilde{\bG}}-\bV_T^\bF =& -\sum_{k=0}^{n-1}\sum_{b\in\{0,1\}}
            \EE\left[\PP(\widetilde\varepsilon_{t_k}=b|\cF_{t_{k+1}})\ln \PP(\widetilde\varepsilon_{t_k}=b|\cF_{t_{k+1}}) 
            \right] \\
            &+ (M_1-M_0)p_k\int_0^T\EE[D_t \varepsilon_{t}] dt\ ,
        \end{align*}
        without imposing $\varepsilon_{t_k}\in\cF_{t_{k+1}}$ for any $k<n$.
        We rewrite it as
        \begin{align*}
        \bV_T^{\widetilde{\bG}}-\bV_T^\bF =& -\sum_{k=0}^{n-1}\EE\left[
        \bOne_{\{\varepsilon_{t_k}=1\}} p_k  
        \ln\left(\bOne_{\{\varepsilon_{t_k}=1\}} p_k\right) +
        (1-\bOne_{\{\varepsilon_{t_k}=1\}} p_k)\ln\left(1-\bOne_{\{\varepsilon_{t_k}=1\}} p_k\right)
        \right]\\
        &+ (M_1-M_0)p_k\int_0^T\EE[D_t \varepsilon_{t}] dt\\
        =&  -\sum_{k=0}^{n-1} \PP(\widetilde\varepsilon_{t_k}=1) \left( p_k\ln p_k + (1-p_k)\ln(1-p_k)\right)+ (M_1-M_0)p_k\int_0^T\EE[D_t \varepsilon_{t}] dt\ .
        \end{align*}
        Finally, we compute the price between~$\bG$ and~$\widetilde\bG$ as follows, 
        \begin{align*}
        \bV_T^\bG-\bV_T^{\widetilde{\bG}} =&  -\sum_{k=0}^{n-1}\left(\PP(\varepsilon_{t_k}=1)\ln \PP(\varepsilon_{t_k}=1) + \PP(\varepsilon_{t_k}=0)\ln \PP(\varepsilon_{t_k}=0)\right)\\
        &+\sum_{k=0}^{n-1} \PP(\widetilde\varepsilon_{t_k}=1) \left( p_k\ln p_k + (1-p_k)\ln(1-p_k)\right) + (M_1-M_0)p_k\int_0^T\EE[D_t \varepsilon_{t}] dt\ ,
        \end{align*}
        where it can be verified that
        $$0<h(x,y):=-(x\ln x +(1-x)\ln(1-x))+xy(y\ln y+(1-y)\ln(1-y))$$ 
        for any $0<x,y<1$,
        while the sign of the last term depends of the sign of $M_1-M_0$ and if $\varepsilon_{t_k}$ is codified as a positive or negative trend of $W$,
        as we can see in the previous examples.
\end{Example}
\section{Incomplete market}\label{subsec:incomplete}
In this section we assume that the Markov chain $\varepsilon$ that modulates the market is measurable in a sigma-algebra bigger than $\cF_T$.
In particular, the market coefficients are also modulated by a Brownian motion $B=\prT{B}$ as follows,
\begin{align}
    dD_t &= {D_t}\,r_{\varepsilon_t}(B_{\cdot})\, dt\ , \quad D_0 = 1 \\
    dS_t &= {S_t}\left(\eta_{\varepsilon_t}(B_{\cdot})\, dt + \xi_{\varepsilon_t}(B_{\cdot})\, dW_t\right)\ ,\quad S_0 = s_0 >0\ .
\end{align}
We assume that $W$ and $B$ are $\PP$-independent and
we introduce the natural information flow $\bE = \{\cE_t, t\geq0\}$ induced by $W$ and $B$ as 
$$\cE_t = \sigma\left( (W_s,B_s), 0\leq s \leq t \right)\ .$$
Note that $(W,B)$ has the predictable representation property in the sense that any $(\PP,\bE)$-local martingale $m=\prT{m}$ can be written as
\begin{equation}\label{PRP.bE}
    m_t = m_0 + \int_0^t \varphi_sdW_s + \int_0^t \phi_s dB_s\ , 
\end{equation}
where $\varphi$, $\phi$ are square-integrable $\bE$-predictable processes.
Indeed, for any $F\in\cE_T$-measurable random variable satisfying $F\in L^2(\PP)$, the following representation holds,
\begin{equation}\label{eq.clark.ocone.G}
    F = \EE[F] + \int_0^T\EE[D^{(1)}_sF\vert\cE_s] dW_s + \int_0^T\EE[D^{(2)}_s F\vert\cE_s] dB_s \ ,
\end{equation}
where $D^{(1)}$ and $D^{(2)}$ represent the Malliavin derivative with respect $W$ and $B$ respectively,
see \cite[Theorem 13.28]{DiNunnoOksendalProske2009} for a reference.
In this case, the discrete time Markov chain satisfies that~$\varepsilon_{t_k}$ is $\cE_{t_{k+1}}$-measurable, being $\bG$ the same enlargement as in \eqref{def.G}.
As we maintain the binary state space, 
the Jacod's hypothesis is still satisfied within the filtration $\bE$.
The motivation is given by assuming the existence of an additional
source of uncertainty in the information flow.
This slight modification induces incompleteness in the market and the computations of the Section~\ref{subsec:complete} do not hold.
\begin{Proposition}\label{prop.BM.G.incomplete}
If we consider the enlargement $\bE\subset\bG$, then 
\begin{align*}
    \widehat W_t &:= W_t - \int_0^t\alpha^\varepsilon_s ds \ ,\quad &\alpha^\varepsilon_t& :=  \sum_{k=0}^{n-1}\bOne_{\{t_k\leq t < t_{k+1}\}}\alpha^\varepsilon_t(k)\ ,\quad &\alpha^e_t(k):=& \frac{\EE[D^{(1)}_t\bOne_{\{\varepsilon_{t_k}=e\}}\vert\cE_t]}{\PP(\varepsilon_{t_k}=e|\cE_t)}\\
    \widehat B_t &:= B_t - \int_0^t\gamma^\varepsilon_s ds \ ,\quad &\gamma^\varepsilon_t& :=  \sum_{k=0}^{n-1}\bOne_{\{t_k\leq t < t_{k+1}\}}\gamma^\varepsilon_t(k)\ ,\quad &\gamma^e_t(k):=& \frac{\EE[D^{(2)}_t\bOne_{\{\varepsilon_{t_k}=e\}}\vert\cE_t]}{\PP(\varepsilon_{t_k}=e|\cE_t)}
\end{align*}
where $\widehat W,\,\widehat B$ are $\bG$-Brownian motions in $[0,T]$. 
\end{Proposition}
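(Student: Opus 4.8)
The plan is to reduce Proposition~\ref{prop.BM.G.incomplete} to the already-established machinery of Proposition~\ref{prop.jacod.semimartingale}, exactly as Corollary~\ref{cor.BM.G} reduces to Theorem~\ref{alpha.binary} in the complete case, but now carrying out the computation in the two-dimensional driver filtration $\bE$ rather than in $\bF$. First I would observe that, by the remark in the text, the binary variable $\varepsilon_{t_k}$ satisfies Jacod's hypothesis with respect to $\bE$ (citing \cite[Example 2.7]{AmendingerImkellerSchweizer1998}), so on each interval $[t_k,t_{k+1})$ the enlargement $\cG_t = \bigcap_{s>t}(\cE_s \vee \sigma(\varepsilon_{t_0},\dots,\varepsilon_{t_k}))$ is an initial enlargement of $\bE$ to which Proposition~\ref{prop.jacod.semimartingale} applies. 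Hence for any $(\PP,\bE)$-local martingale $X$, the process $\widehat X_t = X_t - \int_{t_k}^t \frac{d\langle X, p^{\varepsilon_{t_k},k}\rangle^\bE_s}{p^{\varepsilon_{t_k},k}_{s-}}$ is a $\bG$-local martingale on $[t_k,t_{k+1})$. Applying this to $X = W$ and to $X = B$ separately yields that $W$ and $B$ each become $\bG$-semimartingales with drift corrections $\int_{t_k}^t \alpha^\varepsilon_s(k)\,ds$ and $\int_{t_k}^t \gamma^\varepsilon_s(k)\,ds$, for processes $\alpha^\varepsilon(k), \gamma^\varepsilon(k)$ that I must now identify with the stated formulas.

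The second step is the explicit identification of the information drifts. Using the Clark--Ocone representation \eqref{eq.clark.ocone.G} for the bounded $\cE_{t_{k+1}}$-measurable random variable $\bOne_{\{\varepsilon_{t_k}=e\}}$, I write $p^{e,k}_t = \PP(\varepsilon_{t_k}=e\mid\cE_t)/\PP(\varepsilon_{t_k}=e) = 1 + \PP(\varepsilon_{t_k}=e)^{-1}\bigl(\int_0^t \EE[D^{(1)}_s\bOne_{\{\varepsilon_{t_k}=e\}}\mid\cE_s]\,dW_s + \int_0^t \EE[D^{(2)}_s\bOne_{\{\varepsilon_{t_k}=e\}}\mid\cE_s]\,dB_s\bigr)$ for $t_k\le t<t_{k+1}$. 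Since $W$ and $B$ are independent $\bE$-Brownian motions, $\langle W, B\rangle \equiv 0$, so $\langle W, p^{e,k}\rangle^\bE_t = \PP(\varepsilon_{t_k}=e)^{-1}\int_{t_k}^t \EE[D^{(1)}_s\bOne_{\{\varepsilon_{t_k}=e\}}\mid\cE_s]\,ds$ and similarly for $B$ with $D^{(2)}$. Dividing by $p^{e,k}_{s-} = \PP(\varepsilon_{t_k}=e\mid\cE_{s-})/\PP(\varepsilon_{t_k}=e)$, the normalizing constant cancels and the integrand collapses to $\EE[D^{(1)}_s\bOne_{\{\varepsilon_{t_k}=e\}}\mid\cE_s]/\PP(\varepsilon_{t_k}=e\mid\cE_s)$, which is precisely $\alpha^e_s(k)$; the $B$-computation gives $\gamma^e_s(k)$. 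Specializing to the realized value via the convention \eqref{G.bracket} ($\langle X, p^{\varepsilon_{t_k},k}\rangle^\bE \defeq \langle X, p^{\cdot,k}\rangle^\bE \circ \varepsilon_{t_k}$) produces the stated $\alpha^\varepsilon_t(k)$, $\gamma^\varepsilon_t(k)$, and summing the indicator decomposition over $k$ gives $\alpha^\varepsilon$, $\gamma^\varepsilon$ on all of $[0,T)$.

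The third step is to upgrade from "$\bG$-local martingale" to "$\bG$-Brownian motion" and to extend the statement to the closed interval $[0,T]$. For the martingale-characterization: $\widehat W$ and $\widehat B$ are continuous $\bG$-local martingales with $\langle \widehat W\rangle_t = t$, $\langle\widehat B\rangle_t = t$ (drift corrections are of finite variation, so quadratic variations are unchanged), and $\langle \widehat W, \widehat B\rangle_t = \langle W, B\rangle_t = 0$; Lévy's characterization then gives that $(\widehat W, \widehat B)$ is a two-dimensional $\bG$-Brownian motion on each $[t_k,t_{k+1})$. To close the endpoint $t = t_{k+1}$ (and ultimately $T = t_n$) I would, as in the proof of Corollary~\ref{cor.BM.G}, invoke the entropy/energy bound \cite[Theorem 4.1]{AmendingerImkellerSchweizer1998} — valid here because the enlarging variables are purely atomic and Jacod's hypothesis holds relative to $\bE$ — to conclude $\frac12\int_0^T \EE[(\alpha^\varepsilon_t)^2 + (\gamma^\varepsilon_t)^2]\,dt < +\infty$, and then apply \cite[Theorem 3.2]{Imkeller2003} to patch the Brownian property across the jump times. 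The main obstacle is making sure that the entropy bound as stated in \cite{AmendingerImkellerSchweizer1998} covers the two-dimensional driver: the cited theorem gives the total information drift energy, and one must argue that in the independent-driver case this decomposes as the sum of the $W$-energy and the $B$-energy, so that each term is individually finite; this is routine given $\langle W,B\rangle\equiv 0$ but should be stated carefully.
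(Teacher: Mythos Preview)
Your proposal is correct and follows essentially the same route as the paper: invoke Proposition~\ref{prop.jacod.semimartingale} on each $[t_k,t_{k+1})$, identify the drifts by expanding $p^{e,k}$ via the two-dimensional Clark--Ocone formula~\eqref{eq.clark.ocone.G} and using $\langle W,B\rangle^{\bE}=0$, then conclude. The paper's proof is terser and does not spell out your Step~3 (L\'evy characterization plus the entropy/Imkeller argument to close the endpoint), so your version is in fact more complete on that point, but the underlying strategy is identical.
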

\begin{proof}
The result directly follows from Proposition~\ref{prop.jacod.semimartingale} and the 
representation given by \eqref{eq.clark.ocone.G}, 
in particular we get
\begin{align*}
    p_t^{e,k} =& \frac{\PP(\varepsilon_{t_k}=e\vert \cE_t)}{\PP(\varepsilon_{t_k}=e)} 
    =  \frac{\EE[\bOne_{\{\varepsilon_{t_k}=e\}}\vert \cE_t]}{\PP(\varepsilon_{t_k}=e)} 
    \\
    =& \frac{\PP(\varepsilon_{t_k}=e) +  \EE[\int_0^T\EE[D^{(1)}_s\bOne_{\{\varepsilon_{t_k}=e\}}\vert\cE_s] dW_s + \int_0^T\EE[D^{(2)}_s\bOne_{\{\varepsilon_{t_k}=e\}}\vert\cE_s] dB_s \vert \cE_t]}{\PP(\varepsilon_{t_k}=e)}\\
    =& \frac{\PP(\varepsilon_{t_k}=e) +  \int_0^t\EE[D^{(1)}_s\bOne_{\{\varepsilon_{t_k}=e\}}\vert\cE_s] dW_s + \int_0^t\EE[D^{(2)}_s\bOne_{\{\varepsilon_{t_k}=e\}}\vert\cE_s] dB_s }{\PP(\varepsilon_{t_k}=e)}\ .
\end{align*}
Then we write the differential form as
$$\frac{dp_t^{e,k}}{p_t^{e,k}} = \frac{\EE[D^{(1)}_t\bOne_{\{\varepsilon_{t_k}=e\}}\vert\cE_t]}{\PP(\varepsilon_{t_k}=e|\cE_t)} dW_t +\frac{\EE[D^{(2)}_t\bOne_{\{\varepsilon_{t_k}=e\}}\vert\cE_t] }{\PP(\varepsilon_{t_k}=e|\cE_t)} dB_t\ , $$
and by the independence of $W$ and $B$ it follows that
$\langle W,B\rangle_s^{\bE} = 0$ and we get the result.
\end{proof}
\begin{Lemma}\label{lema.PRP.G}
Let $L=\prT{L}$ be any $(\PP,\bG)$-local martingale with $L_0 = 0$. 
Then, there exist square-integrable and $\bG$-predictable processes $\lambda$ and $\nu$ such that
$$ L_t = \int_0^t \lambda_s d\widehat W_s + \int_0^t \nu_s d\widehat B_s \ ,\quad 0 \leq t\leq T\ .  $$
\end{Lemma}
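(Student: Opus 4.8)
\medskip

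The plan is to show that the pair $(\widehat W, \widehat B)$ enjoys the predictable representation property (PRP) in the filtration $\bG$, and then apply it to $L$. The strategy is the classical one of transferring the PRP from $\bE$ to its enlargement $\bG$, exploiting that the enlargement is, on each interval $[t_k,t_{k+1})$, an initial enlargement by the atomic random variable $\varepsilon_{t_k}$ satisfying Jacod's hypothesis (Assumption~\ref{Assumpt.Jacod.abs}). First I would recall that, by Proposition~\ref{prop.BM.G.incomplete}, $\widehat W$ and $\widehat B$ are $\bG$-Brownian motions on $[0,T]$, and that their quadratic covariation is unchanged by the drift corrections, so $\langle \widehat W, \widehat B\rangle \equiv 0$; hence the stochastic integrals against $\widehat W$ and $\widehat B$ are orthogonal and the representation, once obtained, will be with the claimed $L^2$ and $\bG$-predictability properties.

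\medskip

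The core step is the PRP itself. On each interval $[t_k,t_{k+1})$ the filtration $\bG$ is the initial enlargement of $\bE$ by $\sigma(\varepsilon_{t_0},\dots,\varepsilon_{t_k})$, a finite (purely atomic) $\sigma$-field. Under Jacod's hypothesis, it is a standard fact (see e.g.\ \cite{Fontana18PRP, Amendinger2000}, or the conditional Clark--Ocone argument already used in the paper) that if $(W,B)$ has the $\bE$-PRP, then $(\widehat W,\widehat B)$ has the $\bG$-PRP. Concretely, I would argue by conditioning on the atoms: for each value $e=(e_0,\dots,e_k)$ of $(\varepsilon_{t_0},\dots,\varepsilon_{t_k})$, the regular conditional probability $\PP(\cdot\mid \varepsilon_{t_0}=e_0,\dots,\varepsilon_{t_k}=e_k)$ is, by Jacod, absolutely continuous with respect to $\PP$ with an explicit density built from the $p^{e,j}$; under this measure $(\widehat W,\widehat B)$ is a Brownian motion with respect to $\bG$, and $(W,B)$-martingales transform into $(\widehat W,\widehat B)$-martingales via Girsanov. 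Since $\bE_{t_{k+1}-} \subset \cG_{t_{k+1}-}$ differs only by the finite atomic data, the $\bE$-Clark--Ocone representation \eqref{eq.clark.ocone.G}, applied under each conditional measure and then reassembled, yields the representation of any $\bG$-local martingale against $d\widehat W$ and $d\widehat B$ on $[t_k,t_{k+1})$. Finally I would concatenate over $k=0,\dots,n-1$: stopping $L$ at the $t_k$, representing each increment on $[t_k,t_{k+1})$, and patching the predictable integrands together (they agree with a global $\bG$-predictable process because the intervals are deterministic), exactly as Corollary~\ref{cor.BM.G} concatenates the Brownian motion property.

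\medskip

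The routine remaining points are: reducing a general $\bG$-local martingale to the $L^2$ case by localization (so that the integrands are genuinely square-integrable, matching the statement), and checking $L_0=0$ forces no finite-variation or jump part — but $L$ is continuous since $\bG$ is generated by continuous processes together with finitely many random variables revealed at the deterministic times $t_k$, and across a time $t_k$ the filtration jump adds only already-$\cG_{t_k^-}$-independent... no, here one must be slightly careful: $\varepsilon_{t_k}$ becomes $\cG_{t_k}$-measurable, so an $\bG$-martingale could a priori jump at $t_k$. I expect this to be the main obstacle: one must check that $L$ has no jump at the times $t_k$. This holds because $\varepsilon_{t_k}$ is already $\cE_{t_{k+1}}$-measurable and, by the Jacod density process $p^{\varepsilon_{t_k},k}$ being a continuous $\bE$-martingale on $[t_k,t_{k+1})$ with $p^{\varepsilon_{t_k},k}_{t_k}$ reflecting the prior, the conditional law of $\varepsilon_{t_k}$ given $\cG_{t_k^-}=\cE_{t_k}\vee\sigma(\varepsilon_{t_0},\dots,\varepsilon_{t_{k-1}})$ is nondegenerate and $\varepsilon_{t_k}$ is revealed continuously — more precisely, $\cG_{t_k}=\cG_{t_k^-}$ up to null sets because $\sigma(\varepsilon_{t_k})$ is already contained in $\bigcap_{s>t_k}\cG_s$ by right-continuity \eqref{def.G} only for $t<t_{k+1}$; the cleanest route is to invoke that the $(\bG)$-PRP for $(\widehat W,\widehat B)$ established interval-by-interval already forces all $\bG$-local martingales to be continuous, so no separate jump analysis is needed. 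With continuity and the interval-wise representations in hand, the global representation on $[0,T]$ follows by summation, completing the proof.
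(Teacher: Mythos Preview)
Your overall strategy---transfer the predictable representation property from $\bE$ to $\bG$ using that the enlargement on each interval is an initial one satisfying Jacod's hypothesis---is exactly what the paper does; its entire proof is a single sentence invoking \eqref{PRP.bE}, Proposition~\ref{prop.BM.G.incomplete}, and \cite[Corollary~2.10]{Fontana18PRP}. You even cite the same reference, so at the level of the core idea there is no difference; you simply sketch by hand what the paper delegates to Fontana.

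Where you go beyond the paper is the jump discussion, and there your resolution has a genuine gap. You correctly observe that $\cG_{t_k^-}=\cE_{t_k}$ (the earlier $\varepsilon_{t_j}$, $j<k$, being $\cE_{t_k}$-measurable) while $\cG_{t_k}=\cE_{t_k}\vee\sigma(\varepsilon_{t_k})$, so the filtration is not quasi-left-continuous at $t_k$. A $\bG$-martingale can therefore jump there: for instance $L_t=\EE[\varepsilon_{t_1}\mid\cG_t]-\EE[\varepsilon_{t_1}\mid\cG_0]$ satisfies $\Delta L_{t_1}=\varepsilon_{t_1}-\EE[\varepsilon_{t_1}\mid\cE_{t_1}]\neq 0$, and such an $L$ cannot be written as a stochastic integral against the continuous pair $(\widehat W,\widehat B)$. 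Your proposed fix---that the interval-wise PRP ``already forces all $\bG$-local martingales to be continuous''---is circular: a representation on each half-open interval $[t_k,t_{k+1})$ says nothing about the left limit at the next boundary point $t_{k+1}$, so it cannot rule out a jump there. The paper's one-line proof does not address this either; it is relying on \cite[Corollary~2.10]{Fontana18PRP} to cover the piecewise-initial enlargement directly, which you would need to verify, or else the lemma should be read as a statement about \emph{continuous} $\bG$-local martingales. Your instinct that this is ``the main obstacle'' is right; the argument you give does not dissolve it.
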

\begin{proof}
The result directly follows from \eqref{PRP.bE}, Proposition \ref{prop.BM.G.incomplete} and \cite[Corollary 2.10]{Fontana18PRP}.
\end{proof}
Motivated by the Lemma \ref{lema.PRP.G}, we introduce the exponential $(\PP,\bG)$-local martingale
$$ Z^{\lambda,\nu}_t := \exp\left( -\int_0^t \lambda_s d\widehat W_s - \int_0^t \nu_s d\widehat B_s -\frac{1}{2}\int_0^t  (\lambda_s^2 +  \nu_s^2) ds \right) \ ,\quad 0 \leq t\leq T\ . $$
\begin{Lemma}\label{lemma.ELMM}
The family of processes $\{Z^\nu\}$ with 
$\nu\in L^2(dt\times\PP,\bG)$ satisfies that
$D^{-1}Z^\nu S$ and $D^{-1}Z^\nu X^{\pi}$ are non-negative $\bG$-local martingales, where we define
$$ Z^{\nu}_t := \exp\left( -\int_0^t \theta^\varepsilon_s d\widehat W_s - \int_0^t \nu_s dB_s -\frac{1}{2}\int_0^t  (\theta^\varepsilon_s)^2 +  \nu_s^2\, ds \right) \ ,\quad 0 \leq t\leq T\ . $$
\end{Lemma}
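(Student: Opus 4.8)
The strategy mirrors the complete-market Lemma preceding Theorem~\ref{theo.opt.complete}, now carried out with the two independent $\bG$-Brownian motions $(\widehat W,\widehat B)$ supplied by Proposition~\ref{prop.BM.G.incomplete}. First I would observe that $Z^\nu$ is the Dol\'eans--Dade exponential $\mathcal{E}\!\left(-\int_0^\cdot\theta^\varepsilon_s\,d\widehat W_s-\int_0^\cdot\nu_s\,d\widehat B_s\right)$: since $\langle\widehat W,\widehat B\rangle\equiv0$, the quadratic-variation correction of the sum of the two exponents is exactly $-\tfrac12\int_0^\cdot\big((\theta^\varepsilon_s)^2+\nu^2_s\big)\,ds$, with no cross term. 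Hence $Z^\nu$ is a non-negative $\bG$-local martingale as soon as $\theta^\varepsilon$ and $\nu$ are $\bG$-predictable and locally square-integrable: for $\nu$ this is the standing hypothesis $\nu\in L^2(dt\times\PP,\bG)$, while $\theta^\varepsilon=(\eta_{\varepsilon_\cdot}-r_{\varepsilon_\cdot})/\xi_{\varepsilon_\cdot}+\alpha^\varepsilon$ has a bounded first summand (finitely many coefficient values, $\xi_0,\xi_1>0$) and $\alpha^\varepsilon\in L^2([0,T],dt\times\PP,\bG)$ by the concatenation behind Proposition~\ref{prop.BM.G.incomplete}, exactly as in Corollary~\ref{cor.BM.G}. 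The non-negativity of $D^{-1}Z^\nu S$ and $D^{-1}Z^\nu X^{\pi}$ is then immediate, since $D^{-1}>0$ and $S,\,X^{\pi}$ are solutions of linear SDEs started from positive values, hence positive.

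Next I would record the $\bG$-dynamics of the remaining factors. The discount factor is of finite variation, $d(D_t^{-1})=-D_t^{-1}r_{\varepsilon_t}\,dt$. Substituting $dW_t=d\widehat W_t+\alpha^\varepsilon_t\,dt$ into \eqref{def.SDE.DS} and \eqref{X.SDE} and using the identity $\eta_{\varepsilon_t}-r_{\varepsilon_t}+\xi_{\varepsilon_t}\alpha^\varepsilon_t=\xi_{\varepsilon_t}\theta^\varepsilon_t$ coming from \eqref{def.risk.process}, one gets
\[
d(D_t^{-1}S_t)=D_t^{-1}S_t\,\xi_{\varepsilon_t}\big(\theta^\varepsilon_t\,dt+d\widehat W_t\big),\qquad d(D_t^{-1}X_t^{\pi})=D_t^{-1}X_t^{\pi}\,\pi_t\xi_{\varepsilon_t}\big(\theta^\varepsilon_t\,dt+d\widehat W_t\big),
\]
for any $\pi\in\cA(\bG)$, while $dZ^\nu_t/Z^\nu_t=-\theta^\varepsilon_t\,d\widehat W_t-\nu_t\,d\widehat B_t$.

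Then I would apply It\^o's product rule to $D^{-1}Z^\nu S=(D^{-1}S)\,Z^\nu$. Using once more $\langle\widehat W,\widehat B\rangle\equiv0$, the cross term is $d\langle D^{-1}S,Z^\nu\rangle_t=-D_t^{-1}S_tZ^\nu_t\,\xi_{\varepsilon_t}\theta^\varepsilon_t\,dt$, which is precisely what cancels the finite-variation part $D_t^{-1}S_tZ^\nu_t\,\xi_{\varepsilon_t}\theta^\varepsilon_t\,dt$ inherited from the previous display; what is left is the driftless
\[
d(D_t^{-1}Z^\nu_tS_t)=D_t^{-1}Z^\nu_tS_t\big((\xi_{\varepsilon_t}-\theta^\varepsilon_t)\,d\widehat W_t-\nu_t\,d\widehat B_t\big),
\]
so $D^{-1}Z^\nu S$ is a $\bG$-local martingale, and the identical computation with $\xi_{\varepsilon_t}$ replaced by $\pi_t\xi_{\varepsilon_t}$ yields $d(D_t^{-1}Z^\nu_tX_t^{\pi})=D_t^{-1}Z^\nu_tX_t^{\pi}\big((\pi_t\xi_{\varepsilon_t}-\theta^\varepsilon_t)\,d\widehat W_t-\nu_t\,d\widehat B_t\big)$. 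This cancellation is exactly the reason for the definition \eqref{def.risk.process} of $\theta^\varepsilon$.

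The only point demanding genuine care is not any of these It\^o manipulations but the fact, inherited from Proposition~\ref{prop.BM.G.incomplete}, that $\widehat W$ and $\widehat B$ are $\bG$-Brownian motions on the \emph{closed} interval $[0,T]$: a priori they are only the initial-enlargement Brownian motions on each $[t_k,t_{k+1})$ relative to the atom $\sigma(\varepsilon_{t_0},\dots,\varepsilon_{t_k})$, so one must glue the pieces and close the last interval, as in Corollary~\ref{cor.BM.G}. Granted that, everything above is a localization-plus-It\^o computation performed interval by interval, entirely parallel to the complete-market Lemma and to \cite[Proposition 3.4(2)]{AmendingerImkellerSchweizer1998}.
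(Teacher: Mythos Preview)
Your argument is correct and is essentially the same It\^o product-rule computation as the paper's: both expand the dynamics of $D^{-1}Z^{\lambda,\nu}S$ and observe that the drift vanishes precisely when $\lambda=\theta^\varepsilon$, the only cosmetic difference being that the paper starts from the generic $Z^{\lambda,\nu}$ and \emph{derives} $\lambda=\theta^\varepsilon$, whereas you plug in $\theta^\varepsilon$ from the outset and \emph{verify} the cancellation. One small point worth flagging: the statement of the Lemma (and the displayed proof in the paper) writes $\int_0^t\nu_s\,dB_s$ rather than $\int_0^t\nu_s\,d\widehat B_s$; your reading with $d\widehat B$ is the one consistent with the prior definition of $Z^{\lambda,\nu}$ and is what is actually needed for the drift to vanish in $\bG$, so you have implicitly corrected a typo.
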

\begin{proof}
By using It\^o calculus, we get 
\begin{align*}
    Z^{\lambda,\nu}_tD_t^{-1}S_t = S_0\exp\Big\{&\int_0^t \left( \eta_s(B_\cdot) - r_s(B_\cdot) - \frac{1}{2}\xi_s^2(B_\cdot) + \xi_s(B_\cdot)\alpha_s^\varepsilon -\frac{1}{2}(\lambda_s^2+\nu_s^2)\right) ds \\
    &+\int_0^t \left(\xi_s(B_\cdot) - \lambda_s \right) d\widehat W_s
    -\int_0^t \nu_s  dB_s
    \Big\}\ ,
\end{align*}
then we differentiate in It\^o sense,
\begin{align*}
    \frac{d(Z^{\lambda,\nu}_tD_t^{-1}S_t)}{Z^{\lambda,\nu}_tD_t^{-1}S_t} 
    =&  \xi_t(B_\cdot)\left( \frac{\eta_t(B_\cdot) - r_t(B_\cdot)}{\xi_t(B_\cdot)}  +\alpha_t^\varepsilon  - \lambda_t\right) dt + \left(\xi_t(B_\cdot) - \lambda_t \right) d\widehat W_t
     - \nu_t  dB_t\ .
\end{align*}
By imposing in the previous expression that the drift part is null,
we obtain that $\lambda_t=\theta^\varepsilon_t$. 
\end{proof}
The Inada conditions \eqref{inada.conditions} assure that the function~$I$ is such that~$I(0+) = \infty$ and~$I(\infty) = 0$.
We define
$$ \widehat U (y) := \max_{x > 0} \{U(x) - xy\} = U(I(y)) - yI(y)\ ,\quad 0 <  y <  +\infty\ ,$$
and formulate some assumptions appearing in \cite{karatzas1991} (see also \cite{PhamQuenez2001}).
The first one excludes the logarithmic utility, 
but in this case, the optimization problem can be solved explicitly.
\begin{Assumption}\label{Assump.U}
  \begin{itemize}
      \item $U(0) := \lim_{x\to 0+} U(x) > -\infty .$
      \item The function $h(x):= x U'(x)$ is non decreasing on $\bR^+$.
      \item There exist $\alpha\in (0,1)$ and $\beta\in(1,+\infty)$, such that $ \alpha U'(x) \geq U'(\beta x)$, for every~\hbox{$x\in\bR^+$}.
      \item There exists 
      $\nu\in L^2(dt\times\PP,\bG)$
      such that \hbox{$\EE[\widetilde U (y D_T^{-1} Z_T^\nu)]<+\infty$} for any $y\in\bR^+$.
  \end{itemize}
\end{Assumption}
So, the dual problem of \eqref{utility} is stated as follows
\begin{equation}\label{problem.dual}
    \widetilde{V}(y) := \inf_{\nu} 
    \EE[\widetilde{U}(y D_T^{-1}Z_T^\nu)]\ .
\end{equation}
Finally, we introduce the following family of random variables
\begin{equation}\label{def.xi}
    \xi^{\nu}(x_0) := I\left( \cY^\nu(x_0) D^{-1}_T Z^\nu_T \right)\ ,\quad \nu\in L^2(dt\times\PP,\bG)\ ,
\end{equation}
where $\cY^\nu(x_0)$ satisfies
\begin{equation}\label{def.Y.cal}
    \EE\left[ D_T^{-1} Z^\nu_T I\left( \cY^\nu(x_0) D_T^{-1} Z^\nu_T\right)\right] = x_0\ .
\end{equation}
The following result, proved in \cite{karatzas1991}, 
gives the existence of a deflator under which the contingent claim \eqref{def.xi} is hedgeable and the expectation is preserved.
\begin{Proposition}\label{prop.karatzas}
Under Assumption \ref{Assump.U}, there exists  $\lambda$ such that
for any~$\nu$
\begin{equation}\label{dual.opt}
    \EE\left[D_T^{-1}Z_T^\nu \xi^{\lambda}({x_0})\right] \leq x_0 = \EE\left[D_T^{-1}Z_T^{\lambda}\xi^{\lambda}(x_0)\right]\ . 
\end{equation}
Moreover, the random variable $\xi^{\lambda}(x_0)$ is hedgeable and 
$\lambda$ is the solution of the problem~\eqref{problem.dual} with parameter $\cY^\lambda(x_0)$.
\end{Proposition}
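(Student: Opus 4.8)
The plan is to transplant the convex-duality argument of \cite{karatzas1991} (see also \cite{PhamQuenez2001,Amendinger2003}) to the enlarged filtration~$\bG$, the two structural ingredients being Lemma~\ref{lemma.ELMM}, which supplies the family of deflators $\{D^{-1}Z^{\nu}\}_{\nu}$, and Lemma~\ref{lema.PRP.G}, which guarantees that every $(\PP,\bG)$-local martingale is a stochastic integral with respect to $(\widehat W,\widehat B)$. First I would set up the dual domain: by Lemma~\ref{lemma.ELMM}, each $D^{-1}Z^{\nu}$ with $\nu\in L^2(dt\times\PP,\bG)$ is a non-negative $\bG$-supermartingale turning $D^{-1}X^{\pi}$ into a supermartingale for every $\pi\in\cA(\bG)$, and a convex combination of such deflators retains this property, so one minimises over the convex set generated by $\{D_T^{-1}Z_T^{\nu}\}$. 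The fourth item of Assumption~\ref{Assump.U} makes $\widetilde V(y)<+\infty$, while the remaining items ($U(0)>-\infty$, monotonicity of $x\mapsto xU'(x)$, and $\alpha U'(x)\ge U'(\beta x)$) provide the growth and uniform-integrability control on $\widetilde U$ used below.

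Next I would establish existence of a dual optimiser. From a minimising sequence $\nu_m$ for~\eqref{problem.dual}, a Koml\'{o}s-type argument extracts forward convex combinations of $D_T^{-1}Z_T^{\nu_m}$ converging $\PP$-a.s.\ to a non-negative limit which, by convexity and closedness of the domain together with the predictable representation of Lemma~\ref{lema.PRP.G}, can be identified with $D_T^{-1}Z_T^{\lambda}$ for some $\lambda\in L^2(dt\times\PP,\bG)$; lower semicontinuity (Fatou plus convexity of $\widetilde U$) then gives $\EE[\widetilde U(yD_T^{-1}Z_T^{\lambda})]\le\widetilde V(y)$, so $\lambda$ attains the infimum in~\eqref{problem.dual}, with associated parameter $\cY^{\lambda}(x_0)$ from~\eqref{def.Y.cal}. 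I expect this attainment step --- i.e.\ closedness of the dual domain in the incomplete market --- to be the main obstacle, and it is where Assumption~\ref{Assump.U} is genuinely used.

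With $\lambda$ in hand, the variational inequality~\eqref{dual.opt} follows by a routine perturbation. For arbitrary $\nu$, the map $\rho\mapsto\EE[\widetilde U(yD_T^{-1}((1-\rho)Z_T^{\lambda}+\rho Z_T^{\nu}))]$ is convex on $[0,1]$ and minimised at $\rho=0$, so differentiating under the expectation (licensed by the growth conditions of Assumption~\ref{Assump.U}) and using $\widetilde U'(y)=-I(y)$ yields $\EE[D_T^{-1}Z_T^{\nu}I(yD_T^{-1}Z_T^{\lambda})]\le\EE[D_T^{-1}Z_T^{\lambda}I(yD_T^{-1}Z_T^{\lambda})]$ for every $\nu$. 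Taking $y=\cY^{\lambda}(x_0)$ and invoking~\eqref{def.xi} and~\eqref{def.Y.cal} turns the right-hand side into $x_0$, which is precisely the two relations in~\eqref{dual.opt}.

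Finally, for hedgeability I would consider the $(\PP,\bG)$-martingale $N_t:=\EE[D_T^{-1}Z_T^{\lambda}\,\xi^{\lambda}(x_0)\mid\cG_t]$, observe that $N_0=x_0$ by~\eqref{def.Y.cal} and $N_T=D_T^{-1}Z_T^{\lambda}\,\xi^{\lambda}(x_0)$, and apply Lemma~\ref{lema.PRP.G} to write $N_t=x_0+\int_0^t\psi_s\,d\widehat W_s+\int_0^t\phi_s\,d\widehat B_s$. Then $X_t:=D_t(Z_t^{\lambda})^{-1}N_t$ has $X_0=x_0$ and $X_T=\xi^{\lambda}(x_0)$, and an It\^o expansion identifies $X$ with the wealth process~\eqref{X.SDE} of an explicit $\bG$-predictable strategy~$\pi$, provided the $d\widehat B$-part of $N$ cancels the $\widehat B$-risk carried by $(Z^{\lambda})^{-1}$; it is precisely the optimality of $\lambda$ in~\eqref{problem.dual} (equivalently, the fact that $D^{-1}Z^{\lambda}X$ is a true $\bG$-martingale) that forces this cancellation. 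Checking $\pi\in\cA(\bG)$ then closes the argument; apart from the dual attainment, these cancellation and admissibility checks are the only non-routine points.
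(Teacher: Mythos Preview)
Your proposal is correct and follows exactly the convex-duality route the paper intends: the paper's own proof consists of nothing more than the citation ``It is a consequence of Theorem~8.5, Theorem~9.4 and Theorem~12.3 of \cite{karatzas1991}'', so your sketch of the dual attainment, the perturbation argument for~\eqref{dual.opt}, and the hedgeability via Lemma~\ref{lema.PRP.G} is in fact more detailed than what appears in the paper.
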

\begin{proof}
It is a consequence of Theorem 8.5, Theorem 9.4 and Theorem 12.3 of \cite{karatzas1991}.
\end{proof}
\begin{Theorem}\label{theo.opt.incomplete}
The solution of the optimal portfolio problem is 
\begin{align}\label{opt.utility}
  \esssup_{\pi\in\cA(\bG)} 
  \EE[U( X^{\pi}_T)] =  \EE[U(\xi^\lambda(x_0))] \ ,
\end{align}
where $X^\pi$ satisfies~\eqref{X.SDE}.
The optimal portfolio is attained by a strategy {$\pi^{\bG}\in\cA(\bG)$} which hedges the random variable~\hbox{$\xi^\lambda(x_0) = I(\cY^\lambda(x_0) D_T^{-1} Z^\lambda_T)$}
where $\lambda$ is defined in Proposition \ref{prop.karatzas}.
\end{Theorem}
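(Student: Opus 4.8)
The plan is to reduce the incomplete-market problem to the abstract duality machinery of \cite{karatzas1991}, which has already been collected for us in Proposition~\ref{prop.karatzas}. First I would fix the strategy $\pi^{\bG}\in\cA(\bG)$ that hedges the contingent claim $\xi^\lambda(x_0)=I(\cY^\lambda(x_0)D_T^{-1}Z^\lambda_T)$; its existence is exactly the hedgeability statement of Proposition~\ref{prop.karatzas}, and the fact that such a claim is attainable in $\bG$ rests on the predictable representation property established in Lemma~\ref{lema.PRP.G}. By construction $X^{\pi^{\bG}}_T=\xi^\lambda(x_0)$, so the right-hand side $\EE[U(\xi^\lambda(x_0))]$ is attained by an admissible strategy and hence the supremum is $\geq\EE[U(\xi^\lambda(x_0))]$.

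For the reverse inequality I would argue exactly as in the complete-market case (Theorem~\ref{theo.opt.complete}), but now using the \emph{family} of deflators $\{D^{-1}Z^\nu\}$ rather than a single one. Let $\widehat\pi\in\cA(\bG)$ be arbitrary with wealth process $X^{\widehat\pi}$. By Lemma~\ref{lemma.ELMM}, $D^{-1}Z^\lambda X^{\widehat\pi}$ is a non-negative $\bG$-local martingale, hence a supermartingale, so $\EE[D_T^{-1}Z_T^\lambda X_T^{\widehat\pi}]\leq x_0$. Concavity of $U$ gives the pointwise bound
$$ U(X_T^{\widehat\pi}) \leq U(\xi^\lambda(x_0)) + \cY^\lambda(x_0)D_T^{-1}Z_T^\lambda\bigl(X_T^{\widehat\pi}-\xi^\lambda(x_0)\bigr)\ , $$
equivalently $U(X_T^{\widehat\pi})\leq \widehat U(\cY^\lambda(x_0)D_T^{-1}Z_T^\lambda) + \cY^\lambda(x_0)D_T^{-1}Z_T^\lambda X_T^{\widehat\pi}$. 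Taking expectations and using $\EE[D_T^{-1}Z_T^\lambda X_T^{\widehat\pi}]\leq x_0 = \EE[D_T^{-1}Z_T^\lambda\xi^\lambda(x_0)]$ from \eqref{dual.opt}, together with $\cY^\lambda(x_0)>0$, yields $\EE[U(X_T^{\widehat\pi})]\leq\EE[U(\xi^\lambda(x_0))]$. Taking the essential supremum over $\widehat\pi\in\cA(\bG)$ closes the argument.

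Two technical points need care, and the second is the main obstacle. The minor one is integrability: one must check $\EE[U^-(X_T^{\widehat\pi})]<\infty$ and $\EE[\widehat U(\cY^\lambda(x_0)D_T^{-1}Z_T^\lambda)]<\infty$ so that the expectations above are well-defined and finite; the latter follows from the last bullet of Assumption~\ref{Assump.U} applied with $y=\cY^\lambda(x_0)$, and the growth conditions in Assumption~\ref{Assump.U} control $U^-$. The genuinely delicate step is justifying that $\EE[D_T^{-1}Z_T^\lambda X_T^{\widehat\pi}]\leq x_0$ for \emph{every} admissible $\widehat\pi$, not merely for bounded ones: $D^{-1}Z^\lambda X^{\widehat\pi}$ is only a priori a local martingale, and passing from the local-martingale/supermartingale property to the expectation inequality is exactly where the Karatzas--Lehoczky--Shreve--Xu conditions enter. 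I would handle this by invoking the relevant localization and the dual optimality of $\lambda$ as packaged in Proposition~\ref{prop.karatzas} (which already absorbs Theorems~8.5, 9.4 and~12.3 of \cite{karatzas1991}), so that the only thing left to verify here is that our concrete deflators $D^{-1}Z^\nu$ from Lemma~\ref{lemma.ELMM} fall within the admissible dual class of that framework — which is immediate from $\nu\in L^2(dt\times\PP,\bG)$ and the construction of $\widehat W,\widehat B$ in Proposition~\ref{prop.BM.G.incomplete}.
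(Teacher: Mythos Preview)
Your proposal is correct and follows essentially the same route as the paper: the paper's proof simply invokes Proposition~\ref{prop.karatzas} to obtain the hedging strategy $\pi^\bG$ with $X^{\pi^\bG}_T=\xi^\lambda(x_0)$ and then declares the remainder ``analogous to the complete case'' (Theorem~\ref{theo.opt.complete}), which is exactly the concavity/supermartingale comparison you spell out. Your additional remarks on integrability and on why the supermartingale bound $\EE[D_T^{-1}Z_T^\lambda X_T^{\widehat\pi}]\leq x_0$ holds are more explicit than the paper's terse reference, but they do not change the argument.
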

\begin{proof}
Using Proposition \ref{prop.karatzas} we know that there exists $\pi^\bG\in\cA(\bG)$ such that 
$X^{\pi^\bG}_T = \xi^\lambda_T(x_0)$ 
$\PP$-almost surely.
The rest of the proof is analogous to the complete case, see Proposition \ref{theo.opt.complete}.
\end{proof}
\begin{Example}
We define 
$$ \varepsilon_{t_k} = \bOne_{\{W_{t_{k+1}} > W_{t_k}\}}\bOne_{\{B_{t_{k+1}} > B_{t_k}\}}\ ,\quad k\in\{0,1,...,n-1\}  \ .$$
We aim to compute the information drifts of $W$ and $B$ using Proposition \ref{prop.BM.G.incomplete}.
We proceed as follows,
\begin{align*}
    \alpha^1_t(k)=& \frac{\EE[D^{(1)}_t\bOne_{\{\varepsilon_{t_k}=1\}}\vert\cE_t]}{\PP(\varepsilon_{t_k}=1|\cE_t)} =  \frac{\EE[\bOne_{\{B_{t_{k+1}} > B_{t_k}\}}D^{(1)}_t\bOne_{\{W_{t_{k+1}} > W_{t_k}\}}
    \vert\cE_t]}{\PP(\varepsilon_{t_k}=1|\cE_t)}\\ 
    =& 
    \frac{\EE[\bOne_{\{B_{t_{k+1}} > B_{t_k}\}}\delta_{W_{t_k}}(W_{t_{k+1}})
    \vert\cE_t]}{\PP(\varepsilon_{t_k}=1|\cE_t)}
    =  \frac{\EE[\EE[\bOne_{\{B_{t_{k+1}} > B_{t_k}\}}\delta_{W_{t_k}}(W_{t_{k+1}})
    \vert \cE_t\vee\cF_{t_{k+1}}]
    \vert\cE_t]}{\PP(\varepsilon_{t_k}=1|\cE_t)} \\
    =& \frac{\EE[\delta_{W_{t_k}}(W_{t_{k+1}})\EE[\bOne_{\{B_{t_{k+1}} > B_{t_k}\}}
    \vert \cE_t]\vert\cE_t]}{\PP(\varepsilon_{t_k}=1|\cE_t)} = \overline\Phi\left(\frac{B_{t_k}-B_t}{\sqrt{t_{k+1}-t}}\right)\frac{1}{\sqrt{t_{k+1}-t}}\frac{\Phi'\left(\frac{W_{t_k}-W_t}{\sqrt{t_{k+1}-t}}\right)}{\PP(\varepsilon_{t_k}=1|\cE_t)} 
    \end{align*}
    Similarly, we compute
\begin{equation*}
    \gamma^1_t(k)= \frac{\EE[D^{(2)}_t\bOne_{\{\varepsilon_{t_k}=1\}}\vert\cE_t]}{\PP(\varepsilon_{t_k}=1|\cE_t)} = \overline\Phi\left(\frac{W_{t_k}-W_t}{\sqrt{t_{k+1}-t}}\right)\frac{1}{\sqrt{t_{k+1}-t}}\frac{\Phi'\left(\frac{B_{t_k}-B_t}{\sqrt{t_{k+1}-t}}\right)}{\PP(\varepsilon_{t_k}=1|\cE_t)} \ .
\end{equation*}
As we have pointed out, the logarithmic utility does not satisfy the Assumption \ref{Assump.U}.
However, we can maximize handling with the good properties of the logarithmic, as in Example \ref{Example.BM.increment}.
We conclude that the optimal strategy is
\begin{equation}
        \pi^\bG_t =  \frac{\eta_{\varepsilon_t}}{\xi^2_{\varepsilon_t}} +\frac{\alpha^{\varepsilon}_t}{\xi_{\varepsilon_t}} =
        \sum_{k=0}^{n-1}\bOne_{\{k\leq t < k+1\}}\left( \frac{\eta_{\varepsilon_{t_k}}}{\xi^2_{\varepsilon_{t_k}}} +\frac{\alpha^{\varepsilon}_{t_k}(k)}{\xi_{\varepsilon_{t_k}}} \right)\ ,
\end{equation}
    which is depending only on $\alpha^{\varepsilon}_t(k)$ and not on $\gamma^{\varepsilon}_t(k)$.
\end{Example}
\section{Conclusion}
In this paper we have shown how to incorporate the knowledge about an anticipative 
market modulation in the information flow of an agent who invests in a two-asset market.
In this set-up, we have solved the expected utility optimization problem for both
the complete and the incomplete market,
using the techniques appearing in \cite{karatzas1991}. 
We have also given the gain of the anticipative information 
for the logarithmic utility.
Among the examples we present, we construct one
where the information contains an additional noise made
by a sequence of Bernoulli independent random variables, i.e, may be wrong. 
We show that under this framework the market remains complete and that the accurate information assures higher expected profits under a condition on the market coefficient.

\section*{Acknowledgments}
This research was partially supported by the Spanish \emph{Ministerio de Economía y Competitividad} grants MTM2017-85618-P (via FEDER funds) and 
PID2020-116694GB-I00.
The first author acknowledges financial support by the Community of Madrid  within the framework of the multi-year agreement with the Carlos III Madrid University in its line of action ``\emph{Excelencia para el Profesorado Universitario}'' (V Plan Regional de Investigación Científica e Innovación Tecnológica 2016-2020).
The second author acknowledges financial support by an FPU Grant (FPU18/01101) of Spanish \emph{Ministerio de Ciencia, Innovaci\'{o}n y Universidades}.

\bibliography{main.bib}

\end{document}